%% file: main.tex
\documentclass[10pt]{amsart}

\usepackage{fullpage}

\usepackage{mainstyle}

\title[]{Poincare Polynomials of Heavy-Light Hassett Spaces}

\author{Haggai Liu}
\address[H. Liu]{Department of Mathematics\\
Simon Fraser University\\
Burnaby, BC  V5A 1S6 \\
Canada}
\email[H. Liu]{haggail@sfu.ca}

\date{\today}

\begin{document}

    % \nocite{*} % if include all references
    
    \begin{abstract}
        The Poincar\'e polynomials of the Deligne-Mumford space $\overline{M_{0,n}}$ of stable genus 0 curves have been widely studied by several authors such as Keel and Manin. These polynomials can be computed via a recursive formula that is combinatorial in nature, and their exponential generating functions satisfy elegant functional and differential equations.

        In this paper, we state some combinatorial formulas to the Poincar\'e polynomials of Hassett's heavy-light moduli spaces $\overline{M_{0,w_{m,n}}}$, with $m$ heavy marked points and $n-m$ light marked points. We express the Poincar\'e polynomials recursively in terms of the M\"obius function of a certain lattice of set partitions. In the case of $m=2$, we get a Losev-Manin space. We give an explicit formula for the Poincar\'e polynomial for $\overline{M_{0,w_{2,n}}}$ by counting ordered set partitions; and give a recursive formula for this polynomial similar to that in the setting of $\overline{M_{0,n}}$. We prove this recursive formula using geometric and topological properties of the stratification of Losev-Manin spaces; and use an exponential generating function given by Losev and Manin to simplify this recursive formula. Finally, we give a remarkable generalization to the ordered Bell numbers and a recurrence relation for this generalization.
    \end{abstract}

    \maketitle

    \section{Introduction}
        The Deligne-Mumford moduli space $\DMumford{n}$ of stable $n$-pointed genus zero curves has many well understood algebraic, geometric, topological, and combinatorial properties. For example, $\DMumford{n}$ is the wonderful compactification of a projective braid arrangement, and a braid matroid, with respect to their minimal building sets (See Eg. \cite{deconciniP1995, blankersCHLR2024, eurFMPV2026}), as hence are projective varieties. The Poincar\'e polynomial of $\DMumford{n}$ satisfy well-known combinatorial recurrences, which yield elegant functional and differential equations for their exponential generating functions \cite{manin1995,keel1992, getzler1995}. Recently, \cite{aluffiMN2025} expressed the Poincar\'e polynomial of $\DMumford{n}$ explicitly in terms of the Stirling numbers of the first and second kinds. Eur et al. \cite{eurFMPV2026} later reproduced this formula by expressing this polynomial in terms of the reduced characteristic polynomials of braid matroids. For a more detailed treatment of $\DMumford{n}$, we refer the reader to \cite{cavalieri2016}.

        We consider Hassett's \cite{hassett2003} moduli spaces of weighted stable curves $\DMumford{w}$. For such a space, we assign a vector of weights $w=(w_1, \ldots, w_n)\in (\Q\cap (0,1] )^n$ to the marked points. The Hassett space $\DMumford{w}$ parametrizes curves where the marked points are allowed to coincide if the total weight is $\le 1$. We say that the Hassett space $\DMumford{w}$ is \textit{heavy-light} (and denote $w=w_{m,n}$) if $m\in \{2,\ldots, n\}$ of the (heavy) weights are equal to 1, and the other $n-m$ (light) weights are sufficiently close to 0 (see \cref{subsec:heavy-light-hassett} for more details on heavy-light Hassett spaces). In particular, $\DMumford{w_{n,n}}\cong \DMumford{n}$. 

        Poincar\'e polynomials of $\DMumford{w_{m,n}}$ have been studied in various aspects. Chaudhuri \cite{chaudhuri2016} computed the equivariant Poincar\'e polynomials of $\DMumford{w_{m,n}}$ under the natural $S_m\times S_{n-m}$-action permuting the heavy and light marked points respectively amongst each other. Kannan, Serpente, and Yun \cite{kannanSY2024} gives a reduction from results on equivariant Poincar\'e polynomials to results on ordinary Poincar\'e polynomials in arbitrary genus. This reduction involves a certain rank function from symmetric functions to $\Q$-power series.
        
        Losev-Manin spaces \cite{losevM2000} are heavy-light Hassett spaces with exactly two heavy marked points, so they parametrize ``caterpillar shaped'' curves. Bergstr\"om and Minabe \cite{bergstromM2011} computed the $S_2\times S_{n-2}$-equivariant Poincar\'e polynomial of $\LM{n}$.
        
        The goal of this paper is to obtain elegant recurrences for the Poincar\'e polynomials of $\DMumford{w_{m,n}}$, and give combinatorial proofs for those recurrences by examining the standard stratifications of these Hassett spaces. The methods used in \cite{bergstromM2011,chaudhuri2016,kannanSY2024} directly studies the cohomology of the moduli spaces with extensive use of representation theory, expressing the Poincar\'e polynomials in terms of Schur functions. We using a purely combinatorial approach and obtain recursive formulas by studying the stratifications of these moduli spaces by their degeneration type.

        \subsection{Statement of Results} We summarize the main results in this paper concerning the Poincar\'e polynomials $p_{\DMumford{w_{m,n}}}(x)$ of heavy-light Hassett spaces. In what follows, we write $S(n,k)$ to denote a Stirling number of the second kind, and let $\mu$ be the M\"obius function of a set partition lattice $\Pi_F$ (See \cref{subsec:set-partitions} for precise statements). Our first main result implies that $p_{\DMumford{w_{m,n}}}(x)$ can be computed recursively using the Poincar\'e polynomials of heavy-light Hassett spaces of smaller dimensions.

        \begin{thm}[\cref{thm:poincare-poly-HL-hassett-mu-rec}]
            Let $F:=[n]\setminus[m]=\{m+1,\ldots, n\}$.
            The Poincar\'e polynomial of $\DMumford{w_{m,n}}$ satisfies the recursive formula 
            $$p_{\DMumford{w_{m,n}}}(x)=p_{\DMumford{n}}(x)-\sum_{\mcP\in \Pi_F} \left[ \left(\sum_{\mcQ\in [\hat{0}_F, \mcP]\setminus \{\hat{1}_F\}} \mu(\mcQ, \mcP) p_{\DMumford{w_{m,m+|\mcQ|}}}(x) \right)\left(\prod_{A\in \mcP}p_{\DMumford{|A|+1}}(x)\right) \right].$$
        \end{thm}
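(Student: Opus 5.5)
The plan is to compare the two spaces through Hassett's reduction morphism $\rho\colon \DMumford{n}\to \DMumford{w_{m,n}}$, and to work in the Grothendieck ring of varieties, or equivalently with point counts over $\mathbb{F}_q$. All spaces involved are smooth and projective with cohomology generated by algebraic classes of Tate type. Their Poincar\'e polynomials are therefore recovered from their classes by the substitution $\mathbb{L}\mapsto x^2$, and for every space occurring here this is a polynomial in $x^2$. Hence it suffices to prove the identity for classes, where cut-and-paste additivity and multiplicativity over Zariski-locally trivial fibrations are available.

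\textbf{Step 1: stratify by collision type.} For $\mcP\in\Pi_F$, let $U_{\mcP}\subseteq \DMumford{w_{m,n}}$ be the locally closed locus where two light points coincide exactly when they lie in the same block of $\mcP$. Forgetting all but one light point per block identifies $U_{\mcP}$ with the open locus $U^{\circ}_{m,|\mcP|}\subseteq \DMumford{w_{m,m+|\mcP|}}$ where all light points are distinct. This gives the decomposition
$$[\DMumford{w_{m,k}}]=\sum_{\mcQ\in\Pi_{\{m+1,\dots,k\}}}[U^{\circ}_{m,|\mcQ|}].$$
The count of partitions of a $k-m$ element set into $j$ blocks is a Stirling number of the second kind. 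Möbius inversion on the partition lattice then expresses $[U^{\circ}_{m,|\mcP|}]$ through the classes $[\DMumford{w_{m,m+|\mcQ|}}]$ with coefficients $\mu$. The lattice has the property that intervals above $\mcP$ are isomorphic to $\Pi_{|\mcP|}$. Using this, I will rewrite the inversion as a sum over the interval $[\hat{0}_F,\mcP]$ with weights $\mu(\mcQ,\mcP)$, matching the shape of the inner sum in the statement.

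\textbf{Step 2: fibres of $\rho$.} Over a point of $U_{\mcP}$, the preimage in $\DMumford{n}$ consists of curves in which each cluster $A\in\mcP$ is bubbled off into a tree of rational components. That tree is attached at one node, so the fibre is $\prod_{A\in\mcP}\DMumford{|A|+1}$, with the convention $\DMumford{2}=\mathrm{pt}$ for singleton blocks. I will check that $\rho^{-1}(U_{\mcP})\cong U_{\mcP}\times\prod_{A}\DMumford{|A|+1}$. This should follow from the gluing (clutching) description of boundary strata: the attaching point on the main component is part of the data of $U_{\mcP}$, and the bubbles are glued there. Consequently
$$[\DMumford{n}]=\sum_{\mcP\in\Pi_F}[U^{\circ}_{m,|\mcP|}]\prod_{A\in\mcP}[\DMumford{|A|+1}].$$

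\textbf{Step 3: isolate $p_{\DMumford{w_{m,n}}}$.} Substitute the inversion from Step 1 into the identity of Step 2. The term $\mcP=\hat{0}_F$, $\mcQ=\hat{0}_F$ contributes exactly $p_{\DMumford{w_{m,n}}}(x)$, since all product factors equal $1$ and $\mu=1$. Moving the remaining terms to the other side gives the stated formula. The excluded index $\mcQ=\hat{1}_F$ reflects the bookkeeping of which term has been isolated.

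\textbf{Main obstacle.} I expect the main obstacle to be this index bookkeeping: matching my inversion, naturally written over $[\mcP,\hat{1}_F]$, with the paper's sum over $\mcQ\in[\hat{0}_F,\mcP]\setminus\{\hat{1}_F\}$. My first attempt will be the order-reversing correspondence given by the interval isomorphisms $[\mcP,\hat{1}]\cong\Pi_{|\mcP|}$, together with the fact that $\mu$ depends only on block sizes. If that fails, I will verify the identity for small $n-m$ and adjust which term plays the role of the isolated one.

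A secondary point is the local triviality in Step 2 at the boundary of $U_{\mcP}$, where the main component itself degenerates. The case $m=2$ with all light points colliding also needs checking: there the contracted curve still has three special points, which should suffice. If local triviality is problematic, I would argue fibrewise with point counts, which suffices for classes in this Tate setting.
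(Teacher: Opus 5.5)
Your argument is the paper's: stratify $\DMumford{w_{m,n}}$ by the collision pattern $\mcP\in\Pi_F$ of the light points, and use that the reduction map $\DMumford{n}\to\DMumford{w_{m,n}}$ is trivial over each stratum with fibre $\prod_{A\in\mcP}\DMumford{|A|+1}$. Then M\"obius-invert $p_{\overline{X_{\mcP}}}=\sum_{\mcQ\le\mcP}p_{X_{\mcQ}}$ and isolate the single term containing $p_{\DMumford{w_{m,n}}}(x)$. Working in $K_0$ is harmless but not needed; with the paper's grading $p_X(x)=\sum\dim A^i(X)x^i$ the specialization is $\mathbb{L}\mapsto x$, not $x^2$.

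Your anticipated obstacle is only a clash of conventions. The paper orders $\Pi_F$ by reverse refinement: $\mcQ\le\mcP$ means $\mcQ$ is coarser, $\hat{0}_F=\{F\}$, and $\hat{1}_F$ is the partition into singletons. So its $[\hat{0}_F,\mcP]$ is exactly your interval of coarsenings of $\mcP$, with the same values of $\mu$, and no order reversal or small-case check is needed. The isolated term is $\mcP=\mcQ=\hat{1}_F$, which is precisely why $\hat{1}_F$ is removed from the inner sum. Your Step 3 should therefore say $\hat{1}_F$ rather than $\hat{0}_F$: with the paper's $\hat{0}_F$, the term you name is $p_{\DMumford{w_{m,m+1}}}(x)\,p_{\DMumford{n-m+1}}(x)$, not $p_{\DMumford{w_{m,n}}}(x)$.
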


        In the special case of Losev-Manin spaces where $m=2$, we have the explicit formula below.

        \begin{thm}[\cref{thm:poincare-poly-losev-manin-space}]
            Let $(x)_k:={x\choose k} k!$ denote the falling factorial polynomial. Then the Poincar\'e polynomial of the Losev-Manin space $\DMumford{w_{2,n}}$ satisfies the formula 
            $$p_{\DMumford{w_{2,n}}}(x)=\sum_{(l_1, \ldots, l_k)}S(n-2, \sum l_i){\sum l_i\choose l_1,l_2,\ldots, l_k}\prod_{i=1}^k (x-2)_{l_i-1}$$
            where $(l_1, \ldots, l_k)$ range over sequences of positive integers with $1\le \sum l_i\le n-2$.
        \end{thm}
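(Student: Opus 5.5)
The plan is to compute the class of $\DMumford{w_{2,n}}$ additively, through its stratification by degeneration type, and then turn that class into the Poincar\'e polynomial. I read $x$ as marking cohomological degree $2$, so that $p(x)=\sum_i \dim H^{2i}\,x^i$. The space $\DMumford{w_{2,n}}$ is a smooth projective (toric) variety with only even cohomology of Hodge--Tate type. Hence its Poincar\'e polynomial in $x$ agrees with its virtual Poincar\'e (Serre/$E$-) polynomial evaluated at $uv=x$, equivalently with the point count over $\mathbb{F}_q$ at $q=x$. Since $E$-polynomials are additive over locally closed decompositions and multiplicative over products, it suffices to decompose $\DMumford{w_{2,n}}$ into locally closed strata and compute the class of each.

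\textbf{Describing the strata.} A point of $\DMumford{w_{2,n}}$ is a chain $C_1\cup\cdots\cup C_k$ of $\mathbb{P}^1$'s. The heavy point $1$ lies on $C_1$ and the heavy point $2$ lies on $C_k$. On each $C_i$ the two special points (nodes or heavy points) can be normalized to $0$ and $\infty$. The light points lie in $\mathbb{C}^*$, may coincide with each other, and are considered modulo the $\mathbb{C}^*$-scaling. Stability requires at least one light point on each component.

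I stratify by the following combinatorial data:
\begin{itemize}
\item an ordered set partition of the light points $[n]\setminus[2]$ into blocks, where each block records the light points occupying a common position;
\item a composition $(l_1,\ldots,l_k)$ of the number $L$ of blocks, recording how many distinct positions lie on each component.
\end{itemize}
The data are equivalent to the following choices. First choose a set partition of the $n-2$ light points into $L$ unlabeled blocks, in $S(n-2,L)$ ways. Then distribute these blocks among the ordered components with $l_i$ blocks on $C_i$, in $\binom{L}{l_1,\ldots,l_k}$ ways. This matches the coefficient in the statement. The range $1\le L\le n-2$ is exactly the condition that the light points occupy at least one and at most $n-2$ distinct positions.

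\textbf{Computing each stratum.} The stratum with fixed data is a product over $i$ of open spaces $U_{l_i}$. Here $U_l$ is the space of $l$ distinct labeled points of $\mathbb{C}^*$ modulo scaling. Using the scaling to put the first point at $1$, I get $U_l\cong \mathrm{Conf}_{l-1}(\mathbb{C}^*\setminus\{1\})$, labeled configurations. Its class is computed by choosing the points successively: $(x-2)(x-3)\cdots(x-l)=(x-2)_{l-1}$. For $l=1$ this gives $U_1=\mathrm{pt}$, as it should. Summing the products $\prod_i (x-2)_{l_i-1}$ over all data gives the stated formula.

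\textbf{Main obstacle.} The step I expect to need the most care is the justification that the stratification is a genuine decomposition into locally closed subvarieties, each isomorphic to the stated product. Concretely, I need that the boundary strata of the Hassett space are products of the open moduli of the components, with no extra automorphisms or gluing data; this holds because nodes on a chain have no gluing parameter once the moduli is taken into account. I also need the purity argument that converts the $E$-polynomial into the Poincar\'e polynomial. For the latter, I would first try citing that $\DMumford{w_{2,n}}$ is the toric variety of the permutohedron, hence smooth projective with cohomology spanned by algebraic cycles. Alternatively, I would invoke the earlier stratification results for heavy-light Hassett spaces from the paper, which the recursive \cref{thm:poincare-poly-HL-hassett-mu-rec} presumably already relies on.
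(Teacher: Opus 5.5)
Your proposal is correct and is essentially the paper's proof: you use the same stratification of $\DMumford{w_{2,n}}$ by caterpillar type (a set partition of the light points into coincidence blocks, grouped onto the ordered chain components) and the same count $S(n-2,L){L\choose l_1,\ldots,l_k}$ of strata of type $(l_1,\ldots,l_k)$, with strata isomorphic to $\prod_i\moduli{l_i+2}$ and $p_{\moduli{l+2}}(x)=(x-2)_{l-1}$, summed via additivity and multiplicativity of virtual Poincar\'e polynomials. One wording slip: the ``ordered set partition'' in your first bullet, if cut by a composition, would overcount by $\prod_i l_i!$, but the data you actually count (an unordered set partition plus an assignment of its blocks to the ordered components) is the correct one.
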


        Additionally, we also have the following results concerning $p_{\LM{n}}(x)$.  

        \begin{thm}
            Let $p_n(x):=p_{\LM{n+2}}(x)$, with $p_0(x)=1$.
            \begin{enumerate}[itemsep=10pt]
                \item (\cref{thm:LM-ppoly-deg2-recursion}) $p_{n-1}(x)=(1+x)p_{n-2}(x)+x\sum_{j=1}^{n-3} {n-2\choose j} p_j(x)p_{n-j-2}(x)$.
                \item (\cref{cor:egf-ppoly-LM-space}) The exponential generating function $P(x,t):=\sum_{n\ge0}p_n(x)\frac{t^n}{n!} = \frac{1-x}{e^{(x-1)t}-x}$ is a solution to the differential equation $P_t=xP^2+(1-x)P$.
                \item (\cref{cor:deg1rec-ppoly-LM-space}) $p_n(x)=\sum_{i=1}^n {n\choose i} (x-1)^{i-1}p_{n-i}(x)$.
            \end{enumerate}
        \end{thm}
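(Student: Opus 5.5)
The plan is to prove part (3) first from a stratification count, then get (2) from (3) by a generating-function computation, and finally read off (1) by comparing coefficients in the differential equation. Part (3) is the most natural starting point because it is a first-block decomposition of ordered set partitions. Throughout, $x$ stands for the class of degree $2$. As usual, $\LM{n+2}$ is smooth and projective with cohomology of Tate type, so its Poincar\'e polynomial equals its $E$-polynomial (point count over $\mathbb F_q$ with $q=x$). That $E$-polynomial can be computed additively over any stratification into locally closed pieces.

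\textbf{Step 1: a closed formula from the stratification.} The standard stratification of $\LM{n+2}$ is by degeneration type. A stratum corresponds to a caterpillar with $j$ components, with the two heavy points on the end components. Equivalently, it corresponds to an ordered set partition $(B_1,\ldots,B_j)$ of the $n$ light points, where $B_i$ is the set of light points on the $i$-th component. The open stratum of a single component carrying both special nodes or heavy points and the light points $B_i$ is $\{\text{positions of }|B_i|\text{ points in }\mathbb C^*\}/\mathbb C^*\cong(\mathbb C^*)^{|B_i|-1}$; light points may collide, so there are no diagonals to remove. Hence the stratum is $\prod_i(\mathbb C^*)^{|B_i|-1}$, and
\[
p_n(x)=\sum_{(B_1,\ldots,B_j)}\ \prod_{i=1}^{j}(x-1)^{|B_i|-1},
\]
with the sum over ordered set partitions of $[n]$. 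As a sanity check, I would confirm that this agrees with \cref{thm:poincare-poly-losev-manin-space} for small $n$. One can also recognise this as the $h$-polynomial of the permutohedron, i.e.\ the Eulerian polynomial, since $\LM{n+2}$ is the permutohedral toric variety. This step is the main obstacle. The delicate points are the purity/Tate-type input that lets one replace Betti numbers by the stratum count, and the careful identification of each open stratum as a torus. If needed, I would instead derive the closed formula algebraically from \cref{thm:poincare-poly-losev-manin-space}, by expanding each $(x-2)_{l-1}$ and regrouping.

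\textbf{Step 2: part (3).} Split the sum according to the first block $B_1$, of size $i$. There are ${n\choose i}$ choices for $B_1$, the block contributes $(x-1)^{i-1}$, and the remaining ordered partitions of the other $n-i$ points sum to $p_{n-i}(x)$. This gives (3) directly.

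\textbf{Step 3: part (2).} Set $G(t)=\sum_{i\ge1}(x-1)^{i-1}t^i/i!=\frac{e^{(x-1)t}-1}{x-1}$. Part (3) says exactly that $P=1+GP$, so $P=1/(1-G)$. Simplifying gives $P=\frac{1-x}{e^{(x-1)t}-x}$. Differentiating, and using $e^{(x-1)t}=x+(1-x)/P$, one checks directly that $P_t=xP^2+(1-x)P$.

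\textbf{Step 4: part (1).} Compare the coefficients of $t^{n}/n!$ on both sides of the differential equation:
\[
p_{n+1}=x\sum_{j=0}^{n}{n\choose j}p_jp_{n-j}+(1-x)p_n .
\]
The terms $j=0$ and $j=n$ contribute $2xp_n$, and $2xp_n+(1-x)p_n=(1+x)p_n$. This leaves $p_{n+1}=(1+x)p_n+x\sum_{j=1}^{n-1}{n\choose j}p_jp_{n-j}$. Replacing $n$ by $n-2$ gives (1).

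The paper announces a geometric proof of (1) via boundary strata. If one wants that instead, the analogous route is to fix one light point and run a Keel-type divisor relation, splitting according to which side of a boundary node it lies on. But the generating-function derivation above suffices logically.
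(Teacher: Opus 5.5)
Your proof is correct, but it runs the logic in the opposite direction from the paper.

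\textbf{The paper's route.} The paper proves the quadratic recurrence (1) first, and geometrically. It uses a map $\psi$ on $\LM{n+1}$ that essentially forgets the last light point and, when that point is not on the component carrying $x_2$, cuts the curve at a node. The fibres are $\Pj^1$ over $\LM{n}$, and $\A^1$ over products of two smaller Losev--Manin spaces glued at $*$. The paper then gets (2) from (1) by coefficient comparison together with $P(x,0)=1$, and (3) by inverting $e^{(x-1)t}-x$ as a power series.

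\textbf{Your route.} You use the coarser torus-orbit stratification, with pieces $\prod_i(\C^*)^{|B_i|-1}$. These are the paper's strata $\prod_i M_{0,l_i+2}$ with the coincidence pattern of the light points forgotten. Your fallback regrouping is exactly the identity $(x-1)^{b-1}=\sum_l S(b,l)(x-2)_{l-1}$. With this, (3) becomes a first-block decomposition of ordered set partitions, and (2) and (1) are formal consequences. Each of your steps checks out. This includes extracting $2xp_n$ from the terms $j=0$ and $j=n$, which needs $n\ge 1$; your shift to $n-2$ with $n\ge 3$ guarantees this.

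\textbf{What each approach buys.} Your route is shorter and more robust:
\begin{itemize}
\item it needs only additivity and multiplicativity on an evidently correct stratification, the same input the paper uses for \cref{thm:poincare-poly-losev-manin-space};
\item it avoids the case analysis of bubbles at $x_1$, at $x_2$, or at a node that is needed to make $\psi$ have exactly the stated fibres;
\item it identifies $p_n$ as the Eulerian polynomial;
\item it is close in spirit to Losev and Manin's original point count.
\end{itemize}
What it gives up is the paper's stated aim: a geometric explanation of the terms $(1+x)p_{n-2}$ and $x\binom{n-2}{j}p_jp_{n-j-2}$, parallel to \eqref{eqn:poincare-DMspace-recursive}.

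If you want that geometric version, the mechanism is not a Keel-type divisor relation. Take a chain $D_1\cup\cdots\cup D_k$ with $x_2\in D_k$. The fibre of the forgetful map $\LM{n+1}\to\LM{n}$ over it is the chain itself. This chain decomposes as $D_k\cong\Pj^1$ together with the $k-1$ affine lines $D_i\setminus(D_i\cap D_{i+1})$. The $i$-th affine line is naturally the fibre over the pair obtained by cutting the chain at $D_i\cap D_{i+1}$.
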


        Note that Losev and Manin have proven the same formula for the exponential generating function of the polynomials $p_{\LM{n}}(x)$ in \cite[Theorem 2.3]{losevM2000} by counting moduli points of the finite field $\mathbb{F}_q$. We prove this same formula by first proving a recurrence relation by examining the degeneration type of stable curves.

        Finally, we use these results on $p_{\LM{n}}(x)$ to generalize the order Bell numbers and to prove a recursive formula for these generalizations.
        
    \section{Background}
        \subsection{Boundary Strata in $\DMumford{n}$}
            Let $T$ be a tree with leaves labelled by numbers $1,\ldots, n$ where each internal vertex has degree $\ge3$. We refer to such a tree as a \textit{stable tree}. We say that $T$ \textit{refines} another stable tree $T'$ if $T'$ is obtained from $T$ by contracting some (possibly none) of its internal edges. Each stable tree $T$ is associated to a locally closed subset $X_T\subseteq\DMumford{n}$ consisting of the pointed curves in $\DMumford{n}$ with dual tree $T$.
            We refer to $X_T$ as an open boundary stratum and its closure $\overline{X_T}$ as a {closed boundary stratum}.
            For a stable trees $T$ and $T'$, we have that $X_{T'}\subseteq \overline{X_T}$ if and only if $T'$ is a refinement of $T$. Hence,
            $$\overline{X_T}=\coprod_{T'}X_{T'}=\bigcup_{T'}\overline{X_{T'}}$$
            where $T'$ ranges over the refinements of $T$.
    
            % Finally, for each subset $I\subseteq [n]$ with $2\le |I|\le n-2$, we denote $I^c:=[n]-I$ and 
            % $D_I=D(I|I^c):=X_T$, where $T$ is the stable tree with exactly one internal edge $uv$ such that the leaves indexed by $I$ are adjacent to $u$, and the other $n-|I|$ leaves are adjacent to $v$. In general, for a stable tree $T$ and an internal edge $e$ of $T$, we define $D_e:=D(A|B)$, where the deletion of $e$ from $T$ forms two connected components, one of which contains exactly the leaves labelled by indices in $A$; and the other contains exactly the leaves labelled by indices in $B$.
    
            % \begin{exmp}
            %     Example here to demonstrate the $D_e$ notation
            % \end{exmp}

        \subsection{Poincar\'e Polynomials} 
            Associated to each variety $X$ over $\C$, is a particular polynomial $p_X(x)\in \Z[x]$ known as the \textit{virtual Poincar\'e Polynomial} of $X$. Such polynomials are characterized uniquely by the following properties \cite{fultonM1994}:
            \begin{enumerate}
                \item If $X$ is smooth and compact (as a $\C$-manifold), then 
                $$p_X(x) = \sum_{i=0}^{\dim(X)} \dim A^i(X) x^i$$
                is the ordinary \textit{Poincar\'e polynomial} of $X$. Observe that, in this case, $p_X(x)$ is palindromic due to Poincar\'e duality.
                \item if $X=\coprod X_i$ is a decomposition into finitely many locally closed subsets $X_i$, then $p_X(x)=\sum p_{X_i}(x)$. 
            \end{enumerate}

            It also follows that Poincar\'e polynomials are multiplicative in the sense that $p_{X\times Y}(x) = p_{X}(x)p_Y(x)$.
    
            \begin{exmp}
                Consider the moduli space $\DMumford{n}$. Its Poincar\'e polynomial satisfies the equation 
                $$p_{\DMumford{n}}(x)=\sum_T p_{X_T^\circ}(x)$$
                where $T$ ranges over all stable trees with leaves labelled $1,\ldots, n$. It is known that $p_{\moduli{n}}(x)={x-2\choose n-3}(n-3)!$. See eg. \cite[Prop 1.1.2]{manin1995}.
            \end{exmp}
    
            There has been numerous works on the Poincar\'e polynomial of the Deligne-Mumford moduli space $\DMumford{n}$. See eg. \cite{keel1992,manin1995}. Very recently, \cite{eurFMPV2026} developed new combinatorial formulas and reproduced known formulas for $p_{\DMumford{n}}(x)$ by examining polymatroids and their Hilbert series over a building set. For example, one such formula is 
            \begin{equation}
            \label{eqn:poincare-DMspace-recursive}
                p_{\DMumford{n+1}}(x) = p_{\DMumford{n}}(x) + x\sum_{j=2}^{n-1} {n-1\choose j} p_{\DMumford{j+1}}(x)p_{\DMumford{n-j+1}}(x)
            \end{equation}
            where $p_{\DMumford{n}}(x)=1$ if $n=2,3$ \footnote{We consider $\DMumford{2}$ as a point for notational convenience}. \cref{tab:poincare-poly-Mon-bar} shows the values of $p_{\DMumford{n}}(x)$ up to $n=12$. 
            
            \begin{rmk}
                The coefficient of the linear term of $p_{\DMumford{n}}(x)$ is the rank of the divisor class group of the moduli space $\DMumford{n}$ and can be computed by the formula 
                $$p'_{\DMumford{n}}(0)=2^{n-1}-1-{n\choose 2}.$$
                See \cite[p. 550]{keel1992}.
            \end{rmk}
            
            The recursive formula \eqref{eqn:poincare-DMspace-recursive} is already established in \cite[Cor. 0.3.2]{manin1995}, but \cite{eurFMPV2026} provides an alternative proof using FY-monomials, which are combinatorial and are direct properties of polymatroids and their building sets.

            \begin{table}[]
                \centering
                \begin{tabular}{c|c}
                   $n$  & $p_{\DMumford{n}}(x)$ \\
                   \hline
                    $3$  &  $1$\\
                    $4$ & $x+1$\\
                    $5$ & ${x}^{2}+5\,x+1$\\
                    $6$ & ${x}^{3}+16\,{x}^{2}+16\,x+1$\\
                    $7$ & ${x}^{4}+42\,{x}^{3}+127\,{x}^{2}+42\,x+1$\\
                    $8$ & ${x}^{5}+99\,{x}^{4}+715\,{x}^{3}+715\,{x}^{2}+99\,x+1$\\
                    $9$ & ${x}^{6}+219\,{x}^{5}+3292\,{x}^{4}+7723\,{x}^{3}+3292\,{x}^{2}+219\,x+1$\\
                    $10$ & ${x}^{7}+466\,{x}^{6}+13333\,{x}^{5}+63173\,{x}^{4}+63173\,{x}^{3}+13333\,{x}^{2}+466\,x+1$\\
                    $11$ & ${x}^{8}+968\,{x}^{7}+49556\,{x}^{6}+429594\,{x}^{5}+861235\,{x}^{4}+429594\,{x}^{3}+49556\,{x}^{2}+968\,x+1$\\
                    $12$ & ${x}^{9}+1981\,{x}^{8}+173570\,{x}^{7}+2567940\,{x}^{6}+9300303\,{x}^{5}+9300303\,{x}^{4}+2567940\,{x}^{3}+173570\,{x}^{2}+1981\,x+1$
                \end{tabular}
                \caption{Poincar\'e polynomials of $\DMumford{n}$ for $3\le n\le 12$.}
                \label{tab:poincare-poly-Mon-bar}
            \end{table}

    \subsection{Heavy-Light Hassett Spaces}
    \label{subsec:heavy-light-hassett}

        Hassett \cite{hassett2003} introduced weighted analogues $\DMumford{w}$ of $\DMumford{n}$. For a rational weight vector $w=(w_1, \ldots, w_n) \in (\Q\cap (0,1])^n$ with $\sum w_i>2$, a \textit{$w$-stable curve} is a rational $n$-pointed curve such that marked points that coincide have total weight $\le 1$. We use $\epsilon$ to denote a tiny positive weight. That is, marked points of weight $\epsilon$ can coincide with other marked points with total weight $<1$. The Hassett space $\DMumford{w}$ is the moduli space of $w$-stable curves up to equivalence. Following the terminology in \cite{kannanKL2021}, we refer to a weight vector of the form
        $$w=w_{m,n}:=(1^m, \epsilon^{n-m})$$
        as \textit{heavy-light}. We say that a Hassett space is heavy-light if it is isomorphic to a Hassett space of the form $\DMumford{w_{m,n}}$. In the special case when $m=2$, such a heavy-light Hassett space is a Losev-Manin space \cite{losevM2000}.

        \begin{rmk}\ 
            \begin{itemize}
                \item There is a reduction map $\rho:\DMumford{n}\onto \DMumford{w_{m,n}}$ that successively collapses components that become unstable. If $m\ge n-2$, then $\rho$ is an isomorphism.

                \item More generally, if $r\le m\le n$, then there is a reduction map $\DMumford{w_{m,n}}\onto \DMumford{w_{r,n}}$.
            \end{itemize}
        \end{rmk}
    
    \subsection{Chow ring of $\DMumford{n}$ and $\DMumford{w_{m,n}}$}
        The presentation of the Chow ring $A(\DMumford{n})$ was first computed by Keel, dating back to 1992, using a description of $\DMumford{n}$ as an iterated blowup of $(\Pj^1)^{n-3}$. 
        \begin{thm}[\cite{keel1992}]
            Let $R=\Z[x_I: 2\le |I| \le n-2]$ be the polynomial ring over $\Z$ on $2^n-2n-2$ variables. Then there is a quotient map $R\onto A(\DMumford{n})$ given by $x_I\mapsto [D_I]$. Furthermore, the kernel of this quotient map is the ideal generated by the following elements:
            \begin{enumerate}
                \item $x_I-x_{I^c}$ for all $I$;
                \item $x_I x_J$ for each pair $I,J$ such that $\varnothing\notin \{I\cap J, I \cap J^c, I^c\cap J, I^c \cap J^c\};$
                \item (WDVV relations) for $\{a,b,c,d\}\in {[n]\choose 4}$, 
                $$\sum_{I: I\cap\{a,b,c,d\}=\{a,b\}} x_I - \sum_{I: I\cap\{a,b,c,d\}=\{a,c\}} x_I.$$
            \end{enumerate}
        \end{thm}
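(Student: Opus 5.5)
The plan is to follow Keel's strategy: realize $\DMumford{n+1}$ inductively as an iterated blowup of $\DMumford{n}\times \Pj^1$, and track the Chow ring through each blowup. Recall that $\DMumford{4}\cong\Pj^1$, where the statement is immediate: the only generators are $x_{12}=x_{34}$, $x_{13}=x_{24}$, $x_{14}=x_{23}$, which the WDVV relations identify, and relation (2) kills the square. For the inductive step, consider the map
$$F=(\pi_{n+1},\, f_{\{1,2,3,n+1\}}):\DMumford{n+1}\to \DMumford{n}\times\DMumford{4}\cong \DMumford{n}\times\Pj^1,$$
where $\pi_{n+1}$ forgets the last point and the second factor is the forgetful map to four points. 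I will show that $F$ factors as a sequence of blowups along smooth centers. These centers are (strict transforms of) the loci $D_I\times\{pt\}$ where the point $n+1$ collides with the special points $1,2,3$ on a given boundary component. They are ordered by the dimension of the strata, so that at each stage the center is a smooth subvariety that is a product of smaller moduli spaces.

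At each stage I will apply Keel's blowup formula. Let $Z\subseteq Y$ be a regular embedding of codimension $d$ with $i^*:A(Y)\to A(Z)$ surjective and kernel $J$. Then
$$A(\mathrm{Bl}_ZY)\cong A(Y)[T]/\bigl(P(T),\ T\cdot J\bigr),$$
where $P(T)$ is a lift of the Chern polynomial $\sum_k (-1)^k T^{d-k} c_k(N_{Z/Y})$ and $T=[E]$ is the exceptional divisor class. By induction, $A(\DMumford{n})$ has Keel's presentation. Since each center is a product of spaces $\DMumford{k}$, and their boundary classes are restrictions of boundary classes, surjectivity of $i^*$ follows. The new exceptional classes are exactly the new boundary divisors $D_I$ with $n+1\in I$. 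This shows that $R\to A(\DMumford{n+1})$ is surjective and gives an explicit presentation.

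The relations (1)--(3) hold in $A(\DMumford{n+1})$ for the following reasons. Relation (1) is a tautology of notation. Relation (2) holds because the divisors $D_I$ and $D_J$ are disjoint when $I,J$ are not nested or complementary-nested. Relation (3) is the pullback, under the forgetful map to $\DMumford{4}\cong\Pj^1$, of the linear equivalence of two boundary points; the pullback of a boundary point is exactly $\sum_{I\cap\{a,b,c,d\}=\{a,b\}}D_I$.

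The main obstacle is injectivity: proving that the ideal generated by (1)--(3) is the entire kernel. I would handle this by an algebraic induction that mirrors the geometry. Let $R_n/I_n$ denote Keel's ring. The plan is to show that $R_{n+1}/I_{n+1}$ can be rewritten as iterated extensions of $(R_n/I_n)[x]/(x^2)$ by the same generators-and-relations recipe as the blowup formula, namely $T\cdot J$ together with Chern-polynomial relations. The Chern-polynomial relations will be derived from WDVV and the vanishing relations. The two rings are then surjected one onto the other with matching structure at each step. As a backup, or as a check, I would compare ranks degree by degree: the quotient ring is spanned by monomials in nested boundary divisors, i.e. indexed by stable trees with weights, and its Hilbert series should be bounded above by the Poincar\'e polynomial of $\DMumford{n+1}$ computed from the stratification, $p_{\DMumford{n+1}}(x)=\sum_T p_{X_T}(x)$. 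Combined with surjectivity, this upper bound forces the map to be an isomorphism over $\Q$; torsion-freeness of the spanning set then upgrades the statement to $\Z$.
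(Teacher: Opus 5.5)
The paper gives no proof of this statement; it quotes it from Keel. Your architecture (the iterated blowup $F:\DMumford{n+1}\to\DMumford{n}\times\DMumford{4}$, Keel's blowup formula, induction on $n$) is Keel's, and your check that (1)--(3) hold is fine. There is, however, a genuine gap exactly where you locate the main obstacle: injectivity is asserted, not proved.

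Via the blowup formula, injectivity means the following. For every center $Z$ at every stage $B_k$, the ideal $[E_Z]\cdot J_Z$ and the relation $P_Z([E_Z])$ must lie in the ideal generated by (1)--(3) in $R_{n+1}$. Here $P_Z$ must have constant term exactly $[Z]$; an arbitrary lift of the Chern polynomial changes the ideal by elements of $J_Z$.

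To verify this you need two things. First, an explicit generating set for $J_Z=\ker\bigl(A(B_k)\to A(Z)\bigr)$ whose products with $[E_Z]=x_{S\cup\{n+1\}}$ are visibly consequences of (2) and WDVV. Second, a derivation of the quadratic relation for $x_{S\cup\{n+1\}}$ from WDVV. Both rest on the inductive presentations of the two factors of $Z\cong\DMumford{a}\times\DMumford{b}$. They also need a Chow--K\"unneth formula $A(\DMumford{a}\times\DMumford{b})\cong A(\DMumford{a})\otimes A(\DMumford{b})$, which is not automatic for Chow groups; your surjectivity argument uses it silently as well. The sentence ``the two rings are then surjected one onto the other with matching structure at each step'' is precisely the theorem.

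The backup does not rescue this. Monomials in pairwise compatible $x_I$ do span $R/I$, but they vastly overcount. Already in degree one, the $2^n-n-2$ distinct classes $x_I$ exceed $\operatorname{rank} A^1(\DMumford{n+1})=2^n-1-\binom{n+1}{2}$, and every power $x_I^k$ is such a monomial. So no bound by $p_{\DMumford{n+1}}(x)$ follows. Getting a spanning set of the right size needs a WDVV-based straightening (Gr\"obner or standard-monomial) argument, which is again the whole content of injectivity.

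The geometric input also needs repair.
\begin{itemize}
\item Your centers are only the loci $D_S\times\{p_i\}$ with $S\cap\{1,2,3\}=\{i\}$, lying over the three boundary points $p_1,p_2,p_3$ of $\DMumford{4}$. One must also blow up, for each $S\subseteq\{4,\ldots,n\}$ with $|S|\ge 2$, the locus $\{(C,f_{\{1,2,3,j\}}(C)) : C\in D_S\}$ (independent of $j\in S$). This is a graph over $D_S$, not $D_S\times\{\mathrm{pt}\}$, and its exceptional divisor is $D_{S\cup\{n+1\}}$.
\item Without these the count fails. For $n=5$ your nine centers, each isomorphic to $\Pj^1$, give $(1+x)(x^2+5x+1)+9x(1+x)=x^3+15x^2+15x+1\neq p_{\DMumford{6}}(x)$. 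The missing center over $D_{\{4,5\}}$ accounts for the difference.
\item Every center has dimension $n-4$, so ordering ``by dimension'' says nothing. Centers indexed by nested $S\subsetneq S'$ meet along a divisor in each, so the order matters; they must be blown up in order of decreasing $|S|$.
\item Not every new $D_I$ with $n+1\in I$ is exceptional. The $D_{\{i,n+1\}}$ are strict transforms of sections, and $D_{S\cup\{n+1\}}$ with $|S\cap\{1,2,3\}|\ge 2$ is a component of $\pi_{n+1}^*D_{[n]\setminus S}$. The exceptional divisors are exactly the $D_{S\cup\{n+1\}}$ with $|S|\ge 2$ and $|S\cap\{1,2,3\}|\le 1$.
\end{itemize}
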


        More recently, Kannan, Karp, and Li~\cite{kannanKL2021} used tools from tropical geometry to extend the above result to a presentation of the Chow ring of heavy-light Hassett spaces.

        \begin{thm}[\cite{kannanKL2021}]
            Fix $m\ge2$ and $n\ge 4$ and consider the weight vector $w=w_{m,n}:=(1^m, \epsilon^{n-m})$. 
            Let 
            $$R=\Z[x_I: I\subsetneq \{2,\ldots, n\};\ \{2,\ldots, m\}\cap I \neq \varnothing; \ |I|>1]$$ 
            be the polynomial ring over $\Z$ on $2^{n-1}-2^{n-m}-m$ variables. Then there is a quotient map $R\onto A(\DMumford{w})$. Furthermore, the kernel of this quotient map is the ideal generated by the following elements:
            \begin{enumerate}
                \item $x_I x_J$ for each pair $I,J$ such that $I\cap J\notin \{I,J, \varnothing\} $;
                \item For each $\{i,j\},\{k,l\}\in {\{2,\ldots, n\}\choose 2}$ with $i,k\le m$, 
                $$\sum_{I: \{i,j\}\subseteq I, \{k,l\}\not\subseteq I} x_I - \sum_{I: \{k,l\}\subseteq I, \{i,j\}\not\subseteq I} x_I.$$
            \end{enumerate}
        \end{thm}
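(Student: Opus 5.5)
The plan is to realize $\DMumford{w_{m,n}}$ as a smooth toric-type (tropical) compactification. Its Chow ring is then the Chow ring of a unimodular fan, and a fan Chow ring has a Stanley--Reisner presentation that we expect to match the relations in the statement. Concretely, the open part $M_{0,n}\subseteq\DMumford{w_{m,n}}$ embeds in the torus $T=\mathbb{G}_m^{\binom{n}{2}}/\mathbb{G}_m^n$ via Pl\"ucker coordinates, exactly as in the Gibney--Maclagan / Tevelev description of $\DMumford{n}$. The tropicalization of $M_{0,n}$ is the space of phylogenetic trees. Since $\rho:\DMumford{n}\onto\DMumford{w_{m,n}}$ only collapses components carrying at most one heavy point, I would take the coarsening $\Sigma_w$ of the phylogenetic fan whose cones are indexed by the $w$-stable dual trees. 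I would then show that the closure of $M_{0,n}$ in the toric variety $X(\Sigma_w)$ is $\DMumford{w_{m,n}}$, following Cavalieri--Hampe--Markwig--Ranganathan, who prove that heavy-light Hassett spaces are tropical compactifications.

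First I will identify the rays of $\Sigma_w$. Since point $1$ is heavy, each boundary divisor of $\DMumford{w_{m,n}}$ is a two-component degeneration, and it can be recorded by the set $I$ of marked points on the component not containing $1$. That component must be stable with the weights $w$. It therefore needs at least one heavy point besides $1$, so $I\cap\{2,\ldots,m\}\neq\varnothing$, and it needs $|I|>1$. Stability of the component containing $1$ forces $I\subsetneq\{2,\ldots,n\}$. The number of such $I$ is $2^{n-1}-2^{n-m}-m$, which matches the count in the statement. Cones of $\Sigma_w$ are the collections of pairwise nested or disjoint such sets, since these are exactly the compatible splits of a rooted tree with root $1$. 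The Stanley--Reisner ideal therefore consists of the products $x_Ix_J$ with $I\cap J\notin\{I,J,\varnothing\}$, which is relation (1). Note that disjoint sets are compatible here because of the rooting at $1$; this is why the indexing differs from Keel's, where $x_I=x_{I^c}$.

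Next I will compute the linear relations. For a smooth complete simplicial toric variety, and hence for a tropical compactification with linear (schön, Kimura-type) strata, the Chow ring is $\Z[x_\rho]$ modulo the Stanley--Reisner ideal and the relations $\sum_\rho\langle u,v_\rho\rangle x_\rho$ for characters $u$ of $T$. The relevant characters are cross-ratios. For the cross-ratio built from $1,i,j$ and $1,k,l$, the ray $v_I$ pairs to $1$ if $\{i,j\}\subseteq I$ and $\{k,l\}\not\subseteq I$, to $-1$ in the symmetric case, and to $0$ otherwise. This gives relation (2), in which $i,k\le m$ are needed so that the cross-ratio extends regularly along all $w$-stable degenerations. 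I will check that these characters span the character lattice restricted to the fan, so that no further linear relations are needed.

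The main obstacle is the comparison $A(\DMumford{w_{m,n}})\cong A(\Sigma_w)$, because it is not automatic for tropical compactifications. My first approach is to show that every open stratum of $\DMumford{w_{m,n}}$ is a product of spaces $M_{0,k}$ with points allowed to coincide. Each such space is a complement of hyperplanes in an affine space, so the stratification is affine-linear. I would then apply Totaro's theorem that $A_*\to H_*$ is an isomorphism and that $A$ is generated by strata, together with the Fulton--Sturmfels description of toric Chow rings. This identifies $A(\DMumford{w_{m,n}})$ with $A(X(\Sigma_w))$ restricted to $\Sigma_w$. As a fallback, I could instead push Keel's presentation forward along $\rho$: express each $[D_I]$ on $\DMumford{w_{m,n}}$ as $\rho_*$ of a Keel divisor, and show that relation (2) is the image of the WDVV relations with two heavy indices. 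This route still requires an independent argument that no further relations appear, for example by comparing ranks with Poincar\'e polynomials.
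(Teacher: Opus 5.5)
\textbf{The realization step fails.} Your combinatorics of the fan is right: the rays are the sets $I$ and the cones are the laminar families, which gives relation (1). But the geometric realization you propose does not exist. (The paper itself gives no proof of this statement; it only cites \cite{kannanKL2021}.)

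The error is the choice of interior. Besides the nodal divisors $D_I$, the complement $\overline{M}_{0,w_{m,n}}\setminus M_{0,n}$ contains the collision divisors $\Delta_{ab}=\{x_a=x_b\}$ for light $a,b$. These are the images of Keel's $D_{\{a,b\}}$, which $\rho$ does not contract. So your claim that every boundary divisor is a two-component degeneration fails for this complement, and in a tropical compactification of $M_{0,n}$ these divisors would have to be rays. Moreover, when $n-m\ge 3$ the divisors $\Delta_{ab},\Delta_{ac},\Delta_{bc}$ all contain the codimension-two locus $x_a=x_b=x_c$, so the boundary is not even normal crossings.

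The smallest case already shows this. For $m=2$, $n=4$ one has $\overline{M}_{0,w}\cong\mathbb{P}^1$, and $\overline{M}_{0,w}\setminus M_{0,4}$ consists of three points, one of which is $x_3=x_4$. Here $\mathrm{trop}(M_{0,4})$ is the tropical line, and no fan on it has only the two rays $\{2,3\},\{2,4\}$. More generally, for $m=2$ the correct fan is the complete permutohedral fan, which cannot be a coarsening of the non-complete phylogenetic fan.

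What is true (Cavalieri--Hampe--Markwig--Ranganathan) is that $\overline{M}_{0,w}$ is a tropical compactification of the strictly larger open set $M_{0,w}$ of smooth $w$-stable curves, in which light points may collide. With $x_1=\infty$, this is the complement of the graphic arrangement $\{x_h=x_a\}$, $h\in\{2,\ldots,m\}$. Its intrinsic torus has as characters the degree-zero products of the forms $x_h-x_a$. This torus is a quotient of your Pl\"ucker torus, and the fan lives in the quotient lattice, where the light-collision rays map to $0$.

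This is also the actual reason for $i,k\le m$ in (2): $(x_i-x_j)/(x_k-x_l)$ is a unit on $M_{0,w}$ precisely because $i$ and $k$ are heavy. With your interior, every such ratio would be a unit, and for light $i,j$ its divisor would contain $\Delta_{ij}$.

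\textbf{The comparison step is incomplete.} Even after this correction, it gives only half of what is needed. Totaro's theorem and the linear stratification show that stratum classes span $A(\overline{M}_{0,w})$. Hence $R\to A(\overline{M}_{0,w})$ is surjective once stratum closures are identified with transverse monomials. They do not show that (1) and (2) generate the whole kernel. Your sentence saying that a tropical compactification with linear strata has the toric Chow ring is precisely what must be proved.

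One clean route is to recognize $\overline{M}_{0,w}$ as the De Concini--Procesi model of this graphic arrangement. The graph has vertex set $\{2,\ldots,n\}$ and edges the pairs meeting $\{2,\ldots,m\}$, and the building set consists of the vertex sets inducing connected subgraphs. These are exactly your sets $I$, and the nested sets are exactly the laminar families. Feichtner--Yuzvinsky's presentation of the Chow ring of a wonderful model then yields exactly (1) and (2), with (2) arising as differences of the atom relations. Alternatively, bound the ranks of $R/\langle (1),(2)\rangle$ by the Betti numbers.

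In your fallback, $\rho_*$ is not a ring map. Use $\rho^*$ instead, which is an injective ring map since $\rho_*\rho^*=\mathrm{id}$, to check that (1) and (2) hold. A rank count is still needed to show no further relations appear.
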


    \section{Stratifications of $\DMumford{w_{m,n}}$}

        \subsection{Set partitions}
        \label{subsec:set-partitions}
            Fix a non-empty finite set $F$ consisting of $n$ elements. We denote $S(n,k)$ as the Stirling number of the second kind that counts the number of partitions of $F$ into exactly $k$ parts. Let $B(n):=\sum_{k=1}^n S(n,k)$ denote the $n$-th Bell number. If $\mcP$ and $\mcQ$ are partitions of $F$, we say that $\mcP$ \textit{refines} $\mcQ$ are write $\mcQ\le \mcP$ if each set in $\mcQ$ is a union of sets in $\mcP$. If $\mcQ\le \mcP$, with $\mcQ=\{Q_1, \ldots, Q_k\}$, and $\mcP=\cup_{i=1}^k \{P_{i1},\ldots. P_{il_i}\}$, where each $Q_i=\cup_j P_{ij}$, then we define 
            $$\mu(\mcQ, \mcP):=\prod_{i=1}^k (-1)^{l_i-1} (l_i-1)!.$$
            The function $\mu$ is the M\"obius function of of the lattice $\Pi_F$.
            Furthermore, if we fix an order $Q_1, \ldots, Q_k$ on the parts of $\mcQ$, then we say that the pair $\mcQ\le \mcP$ is of \textit{type $(l_1, \ldots, l_k)$}.
            The partitions of $F$ form a lattice $\Pi_F$ under refinement. Consider the free abelian group 
            $$\Z \Pi_F := \Z\cdot \{e_\mcP : \mcP\in \Pi_F\}$$ 
            with basis elements indexed by the partitions of $F$. For each $\mcP\in \Pi_F$, define 
            $$\overline{e}_{\mcP}:=\sum_{\mcQ\le \mcP} e_{\mcQ}$$
            to be the sum of all basis elements in the lower order ideal of $\mcP$. It follows that the set $\{\overline{e}_\mcP : \mcP\in \Pi_F\}$ is also a $\Z$-basis for $\Z\Pi_F$ and 
            $${e}_{\mcP}=\sum_{\mcQ\le \mcP} \mu(\mcQ,\mcP)\overline{e}_{\mcQ}.$$

            \begin{rmk}
                The partition lattice $\Pi_F$ has a unique minimal element $\hat{0}_F:= \{F\}$; and a unique maximal element $\hat{1}_F:=\{\{x\}: x\in F\}$. 
            \end{rmk}

        \subsection{$w$-stable trees} 
            Let $w=(w_1, \ldots, w_n)$ be a weight vector. A $w$-stable tree is a tree $T$ where each internal vertex has degree $\ge 3$, and the leaves of $T$ are labelled by subsets of $[n]$ such that the leaf labels from a partition $\mcP=\mcP(T)$ of $[n]$ with the condition that for each $A\in \mcP$, $\sum_{i\in A} w_i\le 1$. Similar to the unweighted case, each $w$-weighted stable tree indexes a locally closed subset $X_T$ consisting of the $w$-stable curves with dual tree $T$.

        \subsection{Subsets of $\DMumford{w_{m,n}}$ indexed by set partitions}
            Consider the heavy-light Hassett space $\DMumford{w_{m,n}}$ and set $F:=[n]\setminus [m]=\{m+1, m+2, \ldots, n\}$. For each partition $\mcP\in \Pi_F$, define the locally closed subset 
            $$X_{\mcP}:=\coprod_{T}X_T\subseteq \DMumford{w_{m,n}}$$
            where $T$ ranges from the set of $w_{m,n}$-stable trees such that $\mcP(T)=\mcP\cup \hat{1}_{[m]}$. It follows that $\overline{X_{\mcP}} = \coprod_{\mcQ\le \mcP} X_{\mcP}$, which yields to following equality of Poincar\'e polynomials:
            $$p_{\overline{X_{\mcP}}}(x)=\sum_{\mcQ\le \mcP}p_{X_{\mcP}}(x).$$
            In particular, $\overline{X_{\hat{1}_F}}=\DMumford{w_{m,n}}$ is the entire Hassett space. Additionally, a projection map $\DMumford{w_{m,n}}\onto \DMumford{w_{m,|\mcP|}}$ defined by forgetting certain light marked points, restricts to an isomorphism $\overline{X_{\mcP}}\cong \DMumford{w_{m, m+|\mcP|}}$. That is, the closures of the subsets $X_{\mcP}$ indexed by $\mcP\in \Pi_F$ are themselves heavy-light Hassett spaces of smaller dimension. We state below, an explicit formula for the Poincar\'e polynomial of Losev-Manin spaces. 

            \begin{thm}
            \label{thm:poincare-poly-losev-manin-space}
                Let $(x)_k:={x\choose k} k!$ denote the falling factorial polynomial. Then the Poincar\'e polynomial of the Losev-Manin space $\DMumford{w_{2,n}}$ satisfies the formula 
                $$p_{\DMumford{w_{2,n}}}(x)=\sum_{(l_1, \ldots, l_k)}S(n-2, \sum l_i){\sum l_i\choose l_1,l_2,\ldots, l_k}\prod_{i=1}^k (x-2)_{l_i-1}$$
                where $(l_1, \ldots, l_k)$ range over sequences of positive integers with $1\le \sum l_i\le n-2$.
            \end{thm}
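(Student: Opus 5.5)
The plan is to stratify $\DMumford{w_{2,n}}$ by the combinatorial type of the curve, and to sum the virtual Poincar\'e polynomials of the open strata using additivity and multiplicativity (properties (1), (2) of virtual Poincar\'e polynomials). Write $F=\{3,\ldots,n\}$, so $|F|=n-2$.

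A $w_{2,n}$-stable curve is a chain $C_1\cup\cdots\cup C_k$ of $\Pj^1$'s, with heavy point $1$ on $C_1$ and heavy point $2$ on $C_k$. On each component the two ``special'' points (a heavy point or a node) may be placed at $0$ and $\infty$. Stability forces each $C_i$ to carry at least one light point. Light points may collide with one another, since their total weight is $<1$. They cannot collide with a heavy point, because $1+\epsilon>1$, and they cannot lie at a node.

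The first step is to record the combinatorial type of such a curve. It consists of a set partition $\mcP\in\Pi_F$, whose blocks are the coincidence classes of light points, together with an ordered set partition of the blocks of $\mcP$ into $k$ nonempty groups, one group per component $C_i$. If $l_i$ denotes the number of distinct light positions on $C_i$, then $L:=\sum l_i=|\mcP|$ and $1\le L\le n-2$. For fixed $(l_1,\ldots,l_k)$:
\begin{itemize}
\item there are $S(n-2,L)$ choices of $\mcP$;
\item there are $\binom{L}{l_1,\ldots,l_k}$ ways to distribute its blocks to the components.
\end{itemize}
These types are exactly the $w_{2,n}$-stable (caterpillar) trees, so $\DMumford{w_{2,n}}=\coprod_T X_T$ over them.

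The second step is to identify each open stratum $X_T$ with a product $\prod_{i=1}^k U_{l_i}$. Here $U_l$ is the space of $l$ distinct labelled points in $\C^*=\Pj^1\setminus\{0,\infty\}$ modulo the scaling action of $\C^*$. Scaling is the only automorphism of $\Pj^1$ fixing $0$ and $\infty$ pointwise. The stratum is a product because the components are glued only at nodes, and the moduli of each component are independent. I will use the scaling to move the first light position to $1$. This identifies $U_l$ with the configuration space of $l-1$ distinct ordered points in $\C\setminus\{0,1\}$, i.e.\ $\moduli{l+2}$ with the two heavy/node points and $l$ light points as markings. By additivity, fibering point by point, the $j$-th point ranges over $\C$ minus $j+1$ points, contributing $x-(j+1)$. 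Hence
$$p_{U_l}(x)=(x-2)(x-3)\cdots(x-l)=(x-2)_{l-1}.$$
This agrees with $p_{\moduli{l+2}}(x)=\binom{x-2}{l-1}(l-1)!$ from the earlier example.

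The third step multiplies and sums. Combining the two steps gives
$$p_{X_T}(x)=\prod_{i=1}^k (x-2)_{l_i-1}.$$
Summing over all types, grouped by $(l_1,\ldots,l_k)$ with the counts from the first step, yields the stated formula.

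The main obstacle is justifying the product decomposition of $X_T$ rigorously inside Hassett's space. One needs that the stratum is isomorphic, not merely bijective on points, to $\prod U_{l_i}$. Alternatively, one can note that virtual Poincar\'e polynomials only require a decomposition into locally closed pieces, so a bijective morphism of varieties suffices in characteristic $0$. I would argue this via the boundary gluing maps $\prod_i \DMumford{w_{2,l_i+2}}\to\DMumford{w_{2,n}}$, which restrict to isomorphisms onto the open strata.
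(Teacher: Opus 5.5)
Your proposal is correct and follows essentially the same route as the paper: stratify by caterpillar trees, encoded as a partition of the light points together with an ordered grouping of its blocks (the paper's chains $\mathcal{Q}\le\mathcal{P}$ with an ordering on $\mathcal{Q}$), count the strata of type $(l_1,\ldots,l_k)$ as $S(n-2,\sum l_i)\binom{\sum l_i}{l_1,\ldots,l_k}$, identify each stratum with $\prod_i M_{0,l_i+2}$, and conclude by additivity and multiplicativity. The only difference is that you supply details the paper only asserts: the computation $p_{M_{0,l+2}}(x)=(x-2)_{l-1}$ by adding one point at a time to the configuration space, and a justification that each open stratum really is the product of the component moduli spaces.
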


            \begin{proof}
                Let $w=w_{2,n}$. The $w$-stable trees are exactly of the form \\
                $$
\input{w2n-LM-caterpillar-tree}
                $$\\
                where $(A_1, \ldots, A_k)$ is a composition of $F:=\{3,\ldots, n\}$. A $w$-stable trees of the above form is uniquely characterized by a refinement of the partition $\{A_1, \ldots, A_k\}$. Thus, the strata of $\DMumford{w}$ are in bijection with length two chains $\mcQ\le \mcP$ in $\Pi_F$ together with an order on the parts of $\mcQ$. Such a pair of type $(l_1, \ldots, l_k)$ correspond to a stratum combinatorially equivalent to $\prod_i \moduli{l_i+2}$. There are exactly 
                $$S(n-2, \sum l_i){\sum l_i\choose l_1,l_2,\ldots, l_k}$$
                pairs of type $(l_1, \ldots, l_k)$. The result now follows from the additive and multiplicative properties of virtual Poincar\'e polynomials.
            \end{proof}

            \begin{exmp}[$m=2,n=5$]
            \label{ex:comp-losev-manin-2D}
                We demonstrate how the formula in \cref{thm:poincare-poly-losev-manin-space} can be used to compute the Poincar\'e polynomial of the two-dimensional Losev-Manin space $\DMumford{w_{2,5}}$, the blow-up of $\Pj^2$ at four points. Using the data summarized in \cref{tab:comp-data-losev-manin}, it follows that 
                $$p_{\DMumford{w_{2,5}}}(x) = 1+6+6+(3+3+3)(x-2)+(x-2)(x-3)=x^2+4x+1.$$
                
                \renewcommand{\arraystretch}{2}
                \begin{table}[]
                    \centering
                    \begin{tabular}{|c|c|c|c|c|}
                        \hline
                         $(l_1, \ldots, l_k)$ & $\sum l_i$ & $S(3, \sum l_i)$ & ${\sum l_i\choose l_1,l_2,\ldots, l_k}$ & $\prod_{i=1}^k (x-2)_{l_i-1}$\\
                         \hline
                         $(1)$& $1$ & $1$ & $1$ & $1$\\
                         $(1,1)$& $2$ & $3$ & $2$ & $1$ \\
                         $(2)$& $2$ & $3$ & $1$ & $x-2$ \\
                         $(1,1,1)$& $3$ & $1$ & $6$ & $1$ \\
                         $(1,2)$& $3$ & $1$ & $3$ & $x-2$ \\
                         $(2,1)$& $3$ & $1$ & $3$ & $x-2$ \\
                         $(3)$& $3$ & $1$ & $1$ & $(x-2)(x-3)$ \\
                         \hline
                    \end{tabular}
                    \caption{Data for the computation in \cref{ex:comp-losev-manin-2D}.}
                    \label{tab:comp-data-losev-manin}
                \end{table}
            \end{exmp}
            
            Define the ring map $\psi:\Z\Pi_F\to \Z[x]$ given by $\psi({e_{\mcP}}):=p_{X_{\mcP}}(x)$. Then 
            $$\psi(\overline{e}_\mcP)=p_{\overline{X_{\mcP}}}(x)=p_{\DMumford{w_{m,m+|\mcP|}}}(x).$$

            The reduction map $\DMumford{n}\onto \DMumford{w_{m,n}}$ is a Zariski trivial fibration over $X_{\mcP}$, with fiber isomorphic to the product of Deligne-Mumford spaces 
            $$\prod_{A\in \mcP} \DMumford{|A|+1}.$$
            It follows that 
            $$\DMumford{n}\cong \coprod_{\mcP\in \Pi_F} Y_{\mcP}$$
            where each $Y_{\mcP}\subseteq \DMumford{n}$ is locally closed and 
            $$Y_{\mcP}\cong \left(X_{\mcP}\times \left(\prod_{A\in \mcP}\DMumford{|A|+1}\right)\right).$$

            The Poincar\'e polynomials therefore satisfy 
            \begin{align*}
                p_{\DMumford{n}}(x)&=\sum_{\mcP\in \Pi_F} p_{X_{\mcP}}(x)\left(\prod_{A\in \mcP}p_{\DMumford{|A|+1}}(x)\right)\\
                &=\sum_{\mcP\in \Pi_F} \left(\sum_{\mcQ\in [\hat{0}_F, \mcP]} \mu(\mcQ, \mcP) p_{\overline{X_{\mcQ}}}(x) \right)\left(\prod_{A\in \mcP}p_{\DMumford{|A|+1}}(x)\right)\\
                &=\sum_{\mcP\in \Pi_F} \left(\sum_{\mcQ\in [\hat{0}_F, \mcP]} \mu(\mcQ, \mcP) p_{\DMumford{w_{m,m+|\mcQ|}}}(x) \right)\left(\prod_{A\in \mcP}p_{\DMumford{|A|+1}}(x)\right).
            \end{align*}

            This allows us express the Poincar\'e polynomial of $\DMumford{w_{m,n}}$ in terms of $\DMumford{n}$, and heavy-light Hassett spaces of smaller dimension.

            \begin{thm}\label{thm:poincare-poly-HL-hassett-mu-rec}
                Let $F:=[n]\setminus[m]=\{m+1,\ldots, n\}$.
                The Poincar\'e polynomial of $\DMumford{w_{m,n}}$ satisfies the recursive formula 
                $$p_{\DMumford{w_{m,n}}}(x)=p_{\DMumford{n}}(x)-\sum_{\mcP\in \Pi_F} \left[ \left(\sum_{\mcQ\in [\hat{0}_F, \mcP]\setminus \{\hat{1}_F\}} \mu(\mcQ, \mcP) p_{\DMumford{w_{m,m+|\mcQ|}}}(x) \right)\left(\prod_{A\in \mcP}p_{\DMumford{|A|+1}}(x)\right) \right].$$
            \end{thm}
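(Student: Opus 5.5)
The plan is to isolate the single term of the displayed identity that involves $p_{\DMumford{w_{m,n}}}(x)$ itself and move everything else to the other side. We start from the computation immediately preceding the theorem:
$$p_{\DMumford{n}}(x)=\sum_{\mcP\in \Pi_F} \left(\sum_{\mcQ\in [\hat{0}_F, \mcP]} \mu(\mcQ, \mcP) p_{\DMumford{w_{m,m+|\mcQ|}}}(x) \right)\left(\prod_{A\in \mcP}p_{\DMumford{|A|+1}}(x)\right).$$
This identity rests on three ingredients. The first is the decomposition $\DMumford{n}=\coprod_{\mcP}Y_{\mcP}$ with $Y_{\mcP}\cong X_{\mcP}\times\prod_{A\in\mcP}\DMumford{|A|+1}$, coming from the Zariski-trivial fibration of the reduction map. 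The second is the additivity and multiplicativity of virtual Poincar\'e polynomials. The third is M\"obius inversion $e_{\mcP}=\sum_{\mcQ\le\mcP}\mu(\mcQ,\mcP)\overline{e}_{\mcQ}$ pushed through $\psi$, together with $\psi(\overline{e}_{\mcQ})=p_{\DMumford{w_{m,m+|\mcQ|}}}(x)$. We take this identity as established.

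Next, observe that $\mcQ=\hat{1}_F$ can lie in the interval $[\hat{0}_F,\mcP]$ only when $\mcP=\hat{1}_F$, since $\hat{1}_F$ is the maximum of $\Pi_F$. So the only summand containing $p_{\DMumford{w_{m,m+|\hat 1_F|}}}(x)=p_{\DMumford{w_{m,n}}}(x)$ is the one with $\mcP=\mcQ=\hat{1}_F$. Its coefficient is $\mu(\hat 1_F,\hat 1_F)=1$ (all $l_i=1$ in the product formula) times $\prod_{A\in\hat 1_F}p_{\DMumford{2}}(x)=1$, by the convention that $\DMumford{2}$ is a point.

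Splitting this term off, the remainder is exactly the double sum over $\mcP\in\Pi_F$ and $\mcQ\in[\hat 0_F,\mcP]\setminus\{\hat 1_F\}$ appearing in the statement. Rearranging gives the formula. It is a genuine recursion because every $\mcQ\ne\hat1_F$ has $|\mcQ|<|F|=n-m$, so only strictly smaller heavy-light spaces appear on the right.

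There is no real obstacle. The only points to check are the boundary cases: that $\mu(\mcP,\mcP)=1$ under the paper's convention, and that the $\DMumford{2}$ factors are $1$. One should also note the typo in the stratification sum, where $\overline{X_{\mcP}}=\coprod_{\mcQ\le\mcP}X_{\mcQ}$ is intended. That corrected form is what justifies $\psi(\overline e_{\mcQ})=p_{\overline{X_{\mcQ}}}(x)$ in the preceding identity.
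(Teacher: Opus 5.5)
Your proposal is correct and matches the paper's argument. The theorem follows from the identity derived just before it (via $\coprod_{\mathcal{P}} Y_{\mathcal{P}}$ and M\"obius inversion) by splitting off the unique summand with $\mathcal{Q}=\mathcal{P}=\hat{1}_F$, whose coefficient $\mu(\hat{1}_F,\hat{1}_F)\cdot 1$ equals $1$, and you are also right that the stratification should read $\overline{X_{\mathcal{P}}}=\coprod_{\mathcal{Q}\le\mathcal{P}}X_{\mathcal{Q}}$.
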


            \begin{cor}
            \label{cor:poincare-meq-nminus3-formula}
                If $m=n-3$, then 
                $$p_{\DMumford{w_{m,n}}}(x)=p_{\DMumford{n}}(x)-xp_{\DMumford{n-2}}(x).$$
            \end{cor}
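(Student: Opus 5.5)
The plan is to specialize \cref{thm:poincare-poly-HL-hassett-mu-rec} to the case $|F|=3$ and evaluate the correction sum by hand. When $m=n-3$ we have $F=\{a,b,c\}$, and $\Pi_F$ has exactly five elements:
\begin{itemize}
    \item the minimum $\hat{0}_F=\{F\}$;
    \item the maximum $\hat{1}_F=\{\{a\},\{b\},\{c\}\}$;
    \item the three partitions of the form $\{\{a,b\},\{c\}\}$.
\end{itemize}
Set $A:=p_{\DMumford{w_{m,m+1}}}(x)$ and $B:=p_{\DMumford{w_{m,m+2}}}(x)$. These are the only smaller heavy-light polynomials that can occur, since $|\mcQ|\in\{1,2\}$ whenever $\mcQ\neq \hat{1}_F$.

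I will compute the bracketed term for each $\mcP$, using the explicit formula for $\mu$ and the conventions $p_{\DMumford{2}}=p_{\DMumford{3}}=1$, $p_{\DMumford{4}}=1+x$.

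\emph{Case $\mcP=\hat{1}_F$.} The product is $1$. The admissible $\mcQ$ are $\hat{0}_F$, with $\mu=(-1)^2 2!=2$, and the three middle partitions, each with $\mu=-1$. This contributes $2A-3B$.

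\emph{Case $\mcP$ a middle partition.} The product is $p_{\DMumford{3}}p_{\DMumford{2}}=1$. The admissible $\mcQ$ are $\hat{0}_F$, with $\mu=-1$, and $\mcP$ itself, with $\mu=1$. Summed over the three choices of $\mcP$, this contributes $-3A+3B$.

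\emph{Case $\mcP=\hat{0}_F$.} The product is $p_{\DMumford{4}}=1+x$, and only $\mcQ=\hat{0}_F$ occurs. This contributes $(1+x)A$.

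Adding the three cases, the $B$-terms cancel and the total correction is
$$(2-3+1+x)A=xA.$$
Hence
$$p_{\DMumford{w_{m,n}}}(x)=p_{\DMumford{n}}(x)-x\,p_{\DMumford{w_{m,m+1}}}(x).$$

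Finally, I identify $A$. By the remark on reduction maps, $\DMumford{w_{m,m+1}}\cong\DMumford{m+1}$ because $m\ge (m+1)-2$. Since $m+1=n-2$, this gives $A=p_{\DMumford{n-2}}(x)$, which yields the claimed formula. When $n-2=3$, the convention that the relevant space is a point gives $A=1$, and the formula still holds.

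The main obstacle is purely bookkeeping. The paper's order on $\Pi_F$ is reversed from the usual one: $\hat{0}_F=\{F\}$ is the coarsest partition, and $\mcQ\le\mcP$ means $\mcP$ refines $\mcQ$. So I must take the intervals $[\hat{0}_F,\mcP]$ to consist of coarsenings of $\mcP$, and compute the signs of $\mu$ from the stated product formula. With the wrong orientation, the $B$-terms would not cancel.
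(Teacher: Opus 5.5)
Your proposal is correct and is essentially the paper's proof: you specialize \cref{thm:poincare-poly-HL-hassett-mu-rec} to $|F|=3$ with the same M\"obius values, the coefficient of $p_{\DMumford{w_{m,m+2}}}(x)$ cancels, and the coefficient of $p_{\DMumford{w_{m,m+1}}}(x)$ comes out to $(1+x)-3+2=x$. The only differences are cosmetic: you group the sum by $\mcP$ rather than by the heavy-light polynomial, and you justify via the reduction-map remark the identification $\DMumford{w_{m,m+1}}\cong\DMumford{n-2}$, which the paper uses without comment.
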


            \begin{proof}
                Let $F:=\{n-2,n-1,n\}$. For $\mcQ\le \mcP\in \Pi_F$, we have 
                $$\mu(\mcQ, \mcP) = 
                \begin{cases}
                    1, & \mcQ=\mcP;\\
                    -1,  & \mcQ = \hat{0}_F, |\mcP|=2;\\
                    2, & \mcQ = \hat{0}_F, \mcP=\hat{1}_F;\\
                    -1,  & |\mcQ| = 2, \mcP=\hat{1}_F.
                \end{cases}$$
                A direct application of the equation in \cref{thm:poincare-poly-HL-hassett-mu-rec} yields
                \begin{align*}
                    p_{\DMumford{n}}(x)-p_{\DMumford{w_{m,n}}}(x) &= 3p_{\DMumford{n-1}}(x)((1)(1)+(-1)(1))+p_{\DMumford{n-2}}(x)((1)(1+x)+3(-1)(1)+(2)(1))\\
                    &= xp_{\DMumford{n-2}}(x).\qedhere
                \end{align*}
            \end{proof}

            \begin{rmk}
                Since $p_{\DMumford{5}}(x)=x^2+5x+1$, \cref{cor:poincare-meq-nminus3-formula} recovers the formula 
                $$p_{\DMumford{w_{2,5}}}(x)=x^2+4x+1$$
                that was computed in \cref{ex:comp-losev-manin-2D}.
            \end{rmk}

        \section{Recursive Formulas for Losev-Manin Spaces}

            In this, section, we fix the number of heavy marked points to be $m=2$ and strictly consider the Losev-Manin space $\LM{n}$. Their poincar\'e polynomials satisfy an elegant recursive formula, which was use here to reproduce the exponential generating function in \cite[Theorem 2.3]{losevM2000}.

            \begin{thm}\label{thm:LM-ppoly-deg2-recursion}
                Denote $p_n(x):=p_{\LM{n+2}}(x)$ for $n\ge 1$, and $p_0(x):=1$. Then for $n\ge 3$, we have 
                $$p_{n-1}(x)=(1+x)p_{n-2}(x)+x\sum_{j=1}^{n-3} {n-2\choose j} p_j(x)p_{n-j-2}(x).$$
            \end{thm}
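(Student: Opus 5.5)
The plan is to derive the recursion from the forgetful map that drops one light point, in the spirit of Keel's argument for $\DMumford{n}$. The map is $\pi:\LM{n+1}\onto \LM{n}$, forgetting the last light marked point. The source has $n-1$ light points and the target has $n-2$, so on Poincar\'e polynomials the claim reads
$$p_{n-1}(x)=p_{\LM{n+1}}(x),\qquad p_{n-2}(x)=p_{\LM{n}}(x).$$
I will compute $p_{\LM{n+1}}(x)$ by stratifying the target $\LM{n}$ by the open strata $X_T$ from \cref{thm:poincare-poly-losev-manin-space}, and computing the virtual Poincar\'e polynomial of each preimage $\pi^{-1}(X_T)$.

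\textbf{Step 1 (fibers).} Let $C$ be a $w_{2,n}$-stable caterpillar curve with $k$ components. I claim $\pi^{-1}([C])\cong C$. Suppose the new light point lands at a smooth point of $C$. It may coincide with other light points, since light weights sum to less than $1$, so no bubbling is needed. Suppose instead it lands at a node, or at one of the two heavy points. Then a new component is bubbled off carrying the new light point together with the node or the heavy point. This component is $w$-stable: two nodes and an $\epsilon$-point give weight $>2$, and a node, a heavy point and an $\epsilon$-point also give weight $>2$. Hence every point of $C$ corresponds to exactly one point of $\LM{n+1}$.

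To turn this into an identity of virtual Poincar\'e polynomials, I decompose $\pi^{-1}(X_T)$ into locally closed pieces. For each of the $k$ components there is a piece in which the new point lies in the complement of the two special points (nodes or heavy points) of that component; this is a $\C^*$-torsor over $X_T$. For each of the $k+1$ special points there is a piece isomorphic to $X_T$. Each such piece is itself a union of strata of $\LM{n+1}$, indexed by whether the new point joins an existing light cluster or forms its own. Since $\C^*$-torsors are Zariski locally trivial, multiplicativity gives
$$p_{\pi^{-1}(X_T)}(x)=p_{X_T}(x)\bigl(k(x-1)+(k+1)\bigr)=p_{X_T}(x)\,(1+kx).$$
This is the step I expect to need the most care: the local triviality and the precise matching of the pieces with strata of $\LM{n+1}$. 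The fallback is to match strata of $\LM{n+1}$ directly with pairs consisting of a stratum of $\LM{n}$ and the position of the new point, using the chain-of-partitions description from \cref{thm:poincare-poly-losev-manin-space}.

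\textbf{Step 2 (summing over strata).} Summing over strata gives
$$p_{n-1}=p_{n-2}+x\sum_T k(T)\,p_{X_T}.$$
Write $k(T)=1+(\text{number of nodes of }T)$. The summand $1$ contributes $\sum_T p_{X_T}=p_{n-2}$, which combines with the leading term to give $(1+x)p_{n-2}$. For the node count, I exchange the order of summation. A node of the chain is determined by the set $J\subseteq F$ of light points to its left, where $J$ ranges over nonempty proper subsets of the $n-2$ light points.

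The strata containing such a node form the closed boundary divisor $D_J$. This divisor is isomorphic to $\LM{|J|+2}\times\LM{n-|J|}$: each side becomes a Losev--Manin space, with the node acting as the second heavy point on the left and the first heavy point on the right. Its strata are exactly the strata $X_T$ of $\LM{n}$ having a node with left set $J$. Therefore
$$\sum_T(\#\text{nodes})\,p_{X_T}=\sum_{J}p_{|J|}\,p_{n-2-|J|}=\sum_{j=1}^{n-3}\binom{n-2}{j}p_j\,p_{n-2-j},$$
which is the claimed recursion.

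\textbf{Sanity check.} For $n=3$ the recursion gives $p_2=(1+x)p_1=1+x$. This agrees with $\LM{4}\cong\Pj^1$.
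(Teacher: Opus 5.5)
Your proof is correct, and it organizes the count differently from the paper. The paper builds a single map $\psi$ from $\LM{n+1}$ onto $\LM{n}\sqcup\coprod_{L}\bigl(\LM{|L|+2}\times\LM{n-|L|}\bigr)$, with fibers $\Pj^1$ over $\LM{n}$ and $\A^1$ over each product. In effect, it cuts each fiber $C$ of the forgetful map into two kinds of pieces. The first is the component containing $x_2$, a $\Pj^1$. The second consists, for each node $q$, of the component immediately to the left of $q$ with $q$ removed, an $\A^1$; points of this kind are sent to the boundary divisor of $q$.

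You instead keep the forgetful map $\pi$ itself and compute $p_{\pi^{-1}(X_T)}=p_{X_T}(1+kx)$ stratum by stratum. You then recover the same splitting algebraically, using $k=1+\#\text{nodes}$ and the double count $\sum_T \#\text{nodes}(T)\,p_{X_T}=\sum_J p_{D_J}$.

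Your version can be checked step by step against the stratification $X_T\cong\prod_i\moduli{l_i+2}$ behind \cref{thm:poincare-poly-losev-manin-space}. For instance, the $\C^*$-piece over component $i$ consists of one stratum with factor $(x-2)_{l_i}$ and $l_i$ strata with factor $(x-2)_{l_i-1}$. Together they replace the factor $(x-2)_{l_i-1}$ of $p_{X_T}$ by $(x-1)(x-2)_{l_i-1}$. So your fallback already removes any need for a torsor or local-triviality argument.

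Your version also avoids a choice that $\psi$ must make: where to send the end cases, namely the new point bubbling off at $x_1$, or at the node adjacent to the component of $x_2$. These have to be assigned correctly for the fibers to be exactly $\Pj^1$ and $\A^1$, and this is where the paper's case analysis is most delicate.

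The paper's approach, once those cases are assigned correctly, gives a genuine geometric decomposition of $\LM{n+1}$. It is a $\Pj^1$-fibration over $\LM{n}$ together with $\A^1$-fibrations over the boundary divisors. The recursion can then be read off directly, with no regrouping step.
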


            \begin{proof}

\input{proof-of-thm-LM-ppoly-deg2-recursion}
            \end{proof}

            \begin{cor}
            \label{cor:egf-ppoly-LM-space}
                Let $p_n(x)$ be as defined in \cref{thm:LM-ppoly-deg2-recursion}; and let
                $$P(x,t)=\sum_{n\ge 0} p_n(x)\frac{t^n}{n!}\in \Q(x)[[t]]$$
                be their exponential generating function. Then $P(x,t)$ satisfies the differential equation 
                \begin{equation}
                \label{eqn:diffeq-egf-ppoly-LM}
                    P_t=xP^2+(1-x)P
                \end{equation}
                and hence,
                \begin{equation}
                \label{eqn:egf-ppoly-LM}
                    P(x,t)=\frac{1-x}{e^{(x-1)t}-x},
                \end{equation}
                which agrees with \cite[Theorem 2.3]{losevM2000}.
            \end{cor}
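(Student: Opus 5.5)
The plan is to translate the recursion of \cref{thm:LM-ppoly-deg2-recursion} into a differential equation for the exponential generating function, and then solve that ODE. Re-index by setting $N=n-2\ge 1$, so the recursion reads
$$p_{N+1}(x)=(1+x)p_N(x)+x\sum_{j=1}^{N-1}{N\choose j}p_j(x)p_{N-j}(x).$$
Adding and subtracting the boundary terms $j=0$ and $j=N$, which contribute $2xp_0p_N=2xp_N$, puts it in the form
$$p_{N+1}=x\sum_{j=0}^{N}{N\choose j}p_jp_{N-j}+(1-x)p_N,\qquad N\ge1.$$
The full binomial sum is exactly the coefficient of $t^N/N!$ in $P^2$. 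The left side is the coefficient of $t^N/N!$ in $P_t$. So for $N\ge1$ the coefficients of $P_t$ and of $xP^2+(1-x)P$ agree.

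It remains to check the $N=0$ coefficient, i.e.\ $p_1=xp_0^2+(1-x)p_0=1$. Since $p_1=p_{\LM{3}}(x)=1$ (a point), this holds. I would also verify $N=1$ directly, because the theorem is only stated for $n\ge3$, i.e.\ $N\ge1$: it gives $p_2=(1+x)p_1=1+x$, consistent with $\LM{4}\cong\Pj^1$. This establishes \eqref{eqn:diffeq-egf-ppoly-LM} as an identity in $\Q(x)[[t]]$.

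To obtain \eqref{eqn:egf-ppoly-LM}, I would argue by uniqueness rather than integration. The ODE $P_t=xP^2+(1-x)P$ with $P(x,0)=1$ determines all coefficients recursively: $p_{N+1}$ is a polynomial in $p_0,\dots,p_N$. Hence it suffices to check that $Q:=\frac{1-x}{e^{(x-1)t}-x}$ lies in $\Q(x)[[t]]$, satisfies $Q(x,0)=1$, and satisfies the ODE.

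Membership in $\Q(x)[[t]]$ holds because the denominator has constant term $1-x$, which is invertible in $\Q(x)$. At $t=0$ we get $Q=(1-x)/(1-x)=1$. For the ODE, write $E=e^{(x-1)t}$. Then
$$Q_t=-\frac{(1-x)(x-1)E}{(E-x)^2}=\frac{(1-x)^2E}{(E-x)^2},$$
while
$$xQ^2+(1-x)Q=\frac{(1-x)^2x+(1-x)^2(E-x)}{(E-x)^2}=\frac{(1-x)^2E}{(E-x)^2}.$$
These agree, so $Q=P$.

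If one instead wants a derivation, note the equation is Bernoulli: substituting $u=1/P$ gives the linear equation $u_t=-x-(1-x)u$, which integrates to the same closed form. The main obstacle is only bookkeeping: correctly absorbing the $j=0,N$ terms into the $(1+x)$ coefficient, and handling the low-index initial cases. Everything else is routine.
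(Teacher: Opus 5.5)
Your proof is correct and follows the same route as the paper, which only asserts that the recurrence of \cref{thm:LM-ppoly-deg2-recursion} rearranges into the ODE and that the closed form then follows from $P(x,0)=1$. You supply exactly those details: absorbing the $j=0$ and $j=N$ terms into the $(1+x)$ coefficient, checking the $N=0$ coefficient via $p_1=1$, and confirming the closed form by uniqueness of the formal solution (or via $u=1/P$). One small point: your separate check of $N=1$ is unnecessary, since $N=1$ corresponds to $n=3$ and is already covered by the theorem; only $N=0$ falls outside it.
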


            \begin{proof}
                The differential equation \eqref{eqn:diffeq-egf-ppoly-LM} follows from straightforward algebraic manipulation of $P(x,t)$ using the recurrence in \cref{thm:LM-ppoly-deg2-recursion}. Since $P(x,0)\equiv 1$, Eqn \eqref{eqn:egf-ppoly-LM} holds.
            \end{proof}

            \begin{cor}
            \label{cor:deg1rec-ppoly-LM-space}
                Let $p_n(x)$ be as defined in \cref{thm:LM-ppoly-deg2-recursion}. Then for $n\ge 1$, 
                \begin{equation}
                \label{eqn:LM-ppoly-linear-rec}
                    p_n(x)=\sum_{i=1}^n {n\choose i} (x-1)^{i-1}p_{n-i}(x).
                \end{equation}
            \end{cor}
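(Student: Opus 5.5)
We derive \eqref{eqn:LM-ppoly-linear-rec} from the closed form \eqref{eqn:egf-ppoly-LM} of \cref{cor:egf-ppoly-LM-space}. Clearing denominators there gives the identity
$$P(x,t)\bigl(e^{(x-1)t}-x\bigr)=1-x$$
in $\Q(x)[[t]]$, or more usefully
$$P(x,t)\bigl(e^{(x-1)t}-1\bigr)=(1-x)+(x-1)P(x,t)=(x-1)\bigl(P(x,t)-1\bigr).$$

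Set $E(t):=\frac{e^{(x-1)t}-1}{x-1}=\sum_{i\ge1}(x-1)^{i-1}\frac{t^i}{i!}$. This is a genuine power series in $t$ with polynomial coefficients, so dividing the identity above by $x-1$ is legitimate. We obtain
$$P(x,t)\,E(t)=P(x,t)-1.$$

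Now compare coefficients of $t^n/n!$ for $n\ge1$. On the right-hand side the coefficient is $p_n(x)$, since the constant $1$ only contributes at $n=0$. On the left-hand side, the product of exponential generating functions contributes $\sum_{i=1}^{n}\binom{n}{i}(x-1)^{i-1}p_{n-i}(x)$. Equating the two gives exactly \eqref{eqn:LM-ppoly-linear-rec}.

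The only delicate point is the division by $x-1$. One can avoid it altogether: compare coefficients in the undivided identity, which yields the recurrence multiplied by $(x-1)$, and then cancel the nonzero factor $x-1$ in the integral domain $\Z[x]$.

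As a sanity check, take $n=1$: the recurrence gives $p_1=p_0=1$, matching $\LM{3}$ being a point. For $n=3$ it should reproduce $x^2+4x+1$.

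Alternatively, one could proceed directly from the differential equation \eqref{eqn:diffeq-egf-ppoly-LM}. Differentiating the relation $P\,(e^{(x-1)t}-x)=1-x$ is equivalent to it, so the closed-form route above is the cleanest and I expect no real obstacle.
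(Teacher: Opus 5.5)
Your proof is correct and follows essentially the paper's route. The paper expands $(e^{(x-1)t}-x)^{-1}=\sum_{n\ge0} f_n(x)t^n$, reads off $(1-x)f_n+\sum_{i=1}^{n}\frac{(x-1)^i}{i!}f_{n-i}=0$ for $n\ge1$, and rescales via $p_n=n!(1-x)f_n$; this is the same coefficient comparison you make in $P\,(e^{(x-1)t}-x)=1-x$, followed by division by the unit $x-1$.
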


            \begin{proof}
                Consider the series expansion at $t=0$:
                $$\left(e^{(x-1)t}-x\right)^{-1}=\left((1-x)+\sum_{n\ge 1}\frac{1}{n!}(x-1)^n t^n\right)^{-1} = \sum_{n\ge 0} f_n(x) t^n \in \Q(x)[[t]].$$
                The sequence $n\mapsto f_n(x)$ satisfy the recurrence 
                $$f_n(x)=\frac{-1}{1-x} \sum_{i=1}^n \frac{1}{i!}(x-1)^i f_{n-i}(x)$$
                with $f_0(x)=\frac{1}{1-x}$. Eqn \eqref{eqn:egf-ppoly-LM} implies that $p_n(x)=f_n(x)n!(1-x)$. The desired recurrence for $p_n(x)$ is now immediate.
            \end{proof}

            \begin{table}[]
                \centering
                \begin{tabular}{c|c}
                   $n$  & $p_{\LM{n}}(x)$ \\
                   \hline
                    $3$ & $1$\\
                    $4$ & $x+1$\\
                    $5$ & ${x}^{2}+4\,x+1$\\
                    $6$ & ${x}^{3}+11\,{x}^{2}+11\,x+1$\\
                    $7$ & ${x}^{4}+26\,{x}^{3}+66\,{x}^{2}+26\,x+1$\\
                    $8$ & ${x}^{5}+57\,{x}^{4}+302\,{x}^{3}+302\,{x}^{2}+57\,x+1$\\
                    $9$ & ${x}^{6}+120\,{x}^{5}+1191\,{x}^{4}+2416\,{x}^{3}+1191\,{x}^{2}+120\,x+1$\\
                    $10$ & ${x}^{7}+247\,{x}^{6}+4293\,{x}^{5}+15619\,{x}^{4}+15619\,{x}^{3}+4293\,{x}^{2}+247\,x+1$\\
                    $11$ & ${x}^{8}+502\,{x}^{7}+14608\,{x}^{6}+88234\,{x}^{5}+156190\,{x}^{4}+88234\,{x}^{3}+14608\,{x}^{2}+502\,x+1$\\
                    $12$ & ${x}^{9}+1013\,{x}^{8}+47840\,{x}^{7}+455192\,{x}^{6}+1310354\,{x}^{5}+1310354\,{x}^{4}+455192\,{x}^{3}+47840\,{x}^{2}+1013\,x+1$

                \end{tabular}
                \caption{Poincar\'e polynomials of $\LM{n}$ for $3\le n\le 12$.}
                \label{tab:poincare-poly-LM-space}
            \end{table}

            \begin{exmp}[Losev-Manin space with $6$ marked points]
                We have 
                \begin{align*}
                    p_{\LM{6}}(x)&=p_4(x)\\
                    &= {4\choose 1} p_3(x) + {4\choose 2} (x-1) p_2(x) + {4\choose 3} (x-1)^2 p_1(x) + {4\choose 4} (x-1)^3 p_0(x)\\
                    &= {4\choose 1} (x^2+4x+1) + {4\choose 2} (x-1)(x+1) + {4\choose 3} (x-1)^2(1) + {4\choose 4} (x-1)^3 (1)\\
                    &= x^3+11x^2+11x+1.
                \end{align*}
                Alternatively, using the recurrence given in \cref{thm:LM-ppoly-deg2-recursion}, 
                \begin{align*}
                    p_4(x)&=(1+x)p_3(x)+x\left({3\choose 1} p_1(x)p_2(x)+{3\choose 2}p_2(x)p_1(x)\right)\\
                    &= (x+1)(x^2+4x+1)+6x(x+1)\\
                    &= x^3+11x^2+11x+1.
                \end{align*}
            \end{exmp}

            \noindent \cref{tab:poincare-poly-LM-space} shows the values of $p_{\LM{n}}(x)$ up to $n=12$.

            \begin{rmk}
                It follows from either \cref{thm:LM-ppoly-deg2-recursion} or Eqn \eqref{eqn:LM-ppoly-linear-rec} that the coefficient of the linear term of $p_{\LM{n}}(x)$ is the rank of the divisor class group of the Losev-Manin moduli space $\LM{n}$ and can be computed by the formula 
                $$p'_{\LM{n}}(0)=2^{n-2}-n+1.$$
            \end{rmk}

            \begin{rmk}
                At $x=2$, the equation in \cref{thm:poincare-poly-losev-manin-space} yields
                $$p_{\LM{n}}(2) = \sum_{k=1}^{n-2} S(n-2, k) k!$$
                which is the $(n-2)$-th ordered Bell number. Substituting $x=2$ into Eqn \eqref{eqn:LM-ppoly-linear-rec} yields a known recurrence for the ordered Bell numbers.
            \end{rmk}

            The above results give a remarkable generalization of the ordered Bell numbers. For an integer $r\ge 1$, we define the \textit{ordered $r$-Bell numbers} as 
            $$b(n,r):=\sum_{N_1, \ldots, N_r \in \Zge;\ \sum N_j>0} S(n, \sum jN_j) (\sum jN_j)! {\sum N_j \choose N_1,\ldots, N_r} \left(\frac{1}{r}\right)^{\sum N_j} \prod_j {r\choose j}^{N_j}.$$
            Observe that $b(n,r)=p_{\LM{n+2}}(r+1)$, where the number $N_j$ represents the number of occurrences of $j$ in the sequence $(l_1, \ldots, l_k)$, in the formula given by \cref{thm:poincare-poly-losev-manin-space}. From this relation between ordered $r$-Bell numbers and Poincar\'e polynomials, we immediately have the following result.

            \begin{thm}
                The ordered $r$-Bell numbers $b(n,r)$ defined above satisfies the recurrence 
                $$b(n,r) = r^{n-1}+\sum_{i=1}^{n-1} {n\choose i} r^{i-1}b(n-i,r)$$
                for $n\ge 2$; with the initial condition $b(1,r)=1$.
            \end{thm}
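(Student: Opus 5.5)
Since $b(n,r)=p_{\LM{n+2}}(r+1)=p_n(r+1)$, where $p_n(x)=p_{\LM{n+2}}(x)$ is as in \cref{thm:LM-ppoly-deg2-recursion}, the recurrence should follow by substituting $x=r+1$ into the linear recurrence \eqref{eqn:LM-ppoly-linear-rec} of \cref{cor:deg1rec-ppoly-LM-space}.

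The first step is to justify the identity $b(n,r)=p_n(r+1)$ carefully. By \cref{thm:poincare-poly-losev-manin-space} with $n$ replaced by $n+2$,
$$p_n(r+1)=\sum_{(l_1,\ldots,l_k)} S\Big(n,\sum l_i\Big)\binom{\sum l_i}{l_1,\ldots,l_k}\prod_{i=1}^k (r-1)_{l_i-1}.$$
We group the sequences $(l_1,\ldots,l_k)$ by their multiplicity vector $(N_1,N_2,\ldots)$, so that $\sum jN_j=\sum l_i$ and $\sum N_j=k$. There are $\binom{\sum N_j}{N_1,\ldots}$ sequences with a given multiplicity vector, and each contributes $(\sum jN_j)!\prod_j \big((r-1)_{j-1}/j!\big)^{N_j}$. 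Next we observe that $(r-1)_{j-1}/j! = \frac{1}{r}\binom{r}{j}$. This factor vanishes for $j>r$, so only $N_1,\ldots,N_r$ survive, and the result is exactly the defining sum for $b(n,r)$. The paper asserts this identity in one line, but I would write out the falling-factorial identity explicitly, since it is the only place where anything nontrivial happens.

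The second step is the substitution itself. For $n\ge 1$, \eqref{eqn:LM-ppoly-linear-rec} at $x=r+1$ gives
$$b(n,r)=p_n(r+1)=\sum_{i=1}^n\binom{n}{i}r^{i-1}p_{n-i}(r+1).$$
We split off the term $i=n$. Because $p_0=1$, this term equals $r^{n-1}$. The remaining terms with $1\le i\le n-1$ have $n-i\ge 1$, so $p_{n-i}(r+1)=b(n-i,r)$, and this yields the stated recurrence. The split is needed only because $b(0,r)$ is not defined: the constraint $\sum N_j>0$ combined with $S(0,m)=0$ for $m>0$ would make it vanish, whereas $p_0=1$.

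The initial condition follows from $b(1,r)=p_1(r+1)=p_{\LM{3}}=1$, or directly from the definition, since only $N_1=1$ contributes. For $n=1$ the sum is empty, so the recurrence would also give $r^0=1$. The main potential obstacle is simply bookkeeping in the first step, namely matching the multinomial and binomial factors after grouping by multiplicities.
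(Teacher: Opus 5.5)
Your proposal is correct and is essentially the paper's own argument: the paper observes $b(n,r)=p_n(r+1)$ by reading $N_j$ as the number of occurrences of $j$ in $(l_1,\ldots,l_k)$ in the explicit Losev--Manin formula, and then gets the recurrence by setting $x=r+1$ in the linear recurrence for $p_n(x)$, where the $i=n$ term gives $r^{n-1}$ because $p_0=1$. You also fill in the bookkeeping the paper leaves implicit, namely the identity $(r-1)_{j-1}/j!=\frac{1}{r}\binom{r}{j}$, which vanishes for $j>r$, and the multinomial count of sequences with a given multiplicity vector.
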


            % \begin{exmp}[$r=2$]
            %     The ordered $2$-Bell numbers are 
            %     \begin{align*}
            %         b(n,2)&=\sum_{N_1,N_2} S(n, N_1+2N_2)(N_1+2N_2)!\frac{(N_1+N_2)!}{N_1!N_2!}2^{-N_2}\\
            %         &= \sum_{N_1=0}^n \frac{1}{N_1!} \sum_{N_2=0}^{\lfloor \frac{n-N_1}{2} \rfloor} S(n, N_1+2N_2)(N_1+2N_2)!\frac{(N_1+N_2)!}{N_2!}2^{-N_2}.
            %     \end{align*}
            % \end{exmp}

    \section*{References}
        \printbibliography[title={\ }, heading=none]

\end{document}

%% file: w2n-LM-caterpillar-tree.tex
\begin{tikzpicture}[scale=2]
    \node [circle, draw, fill, inner sep =1pt, label = { 180: 1 }] (a0) at (0,0) {};
    \node [circle, draw, fill, inner sep =1pt] (a1) at (1,0) {};
    \node [circle, draw, fill, inner sep =1pt] (a11) at (0.7,0.5){};
    \node [circle, draw, fill, inner sep =1pt] (a12) at (0.9,0.5) {};
    \node [circle, draw, fill, inner sep =1pt] (a13) at (1.3,0.5) {};
    
    \node [circle, draw, fill, inner sep =1pt] (a2) at (2,0) {};
    \node [circle, draw, fill, inner sep =1pt] (a21) at (1.7,0.5){};
    \node [circle, draw, fill, inner sep =1pt] (a22) at (1.9,0.5) {};
    \node [circle, draw, fill, inner sep =1pt] (a23) at (2.3,0.5) {};
    
    \node [circle, draw, fill, inner sep =1pt] (a3) at (3,0) {};
    \node [circle, draw, fill, inner sep =1pt] (a31) at (2.7,0.5){};
    \node [circle, draw, fill, inner sep =1pt] (a32) at (2.9,0.5) {};
    \node [circle, draw, fill, inner sep =1pt] (a33) at (3.3,0.5) {};
    
    \node [circle, draw, fill, inner sep =1pt, label = { 0: 2 }] (a4) at (4,0) {};

    \draw (a0) -- (a1);
    \draw (a1) -- (a11);
    \draw (a1) -- (a12);
    \draw (a1) -- (a13);
    \node (etc) at ($ (a12)!0.5!(a13) $) {$\ldots$};
    \draw [decorate,decoration={brace,amplitude=5pt, raise=1pt},xshift=0pt,yshift=10pt]
    (a11.north) -- (a13.north) node [black,midway,xshift=0cm,yshift=0.5cm] 
    {$A_1$};
    
    \draw (a1) -- (a2);
    \draw (a2) -- (a21);
    \draw (a2) -- (a22);
    \draw (a2) -- (a23);
    \node (etc) at ($ (a22)!0.5!(a23) $) {$\ldots$};
    \draw [decorate,decoration={brace,amplitude=5pt, raise=1pt},xshift=0pt,yshift=10pt]
    (a21.north) -- (a23.north) node [black,midway,xshift=0cm,yshift=0.5cm] 
    {$A_2$};
    
    \node (etc) at ($ (a2)!0.5!(a3) $) {$\ldots$};
    \draw (a2) -- (2.25,0);
    \draw (a3) -- (2.75,0);
    
    \draw (a3) -- (a4);
    \draw (a3) -- (a31);
    \draw (a3) -- (a32);
    \draw (a3) -- (a33);
    \node (etc) at ($ (a32)!0.5!(a33) $) {$\ldots$};
    \draw [decorate,decoration={brace,amplitude=5pt, raise=1pt},xshift=0pt,yshift=10pt]
    (a31.north) -- (a33.north) node [black,midway,xshift=0cm,yshift=0.5cm] 
    {$A_k$};

\end{tikzpicture}

%% file: proof-of-thm-LM-ppoly-deg2-recursion.tex
For ease of notation, we write $\DMumford{H,L}$ for disjoint finite sets $H$ and $L$, to denote the heavy-light Hassett space $\DMumford{w_{|H|,|L|}}$, where $H$ and $L$ are indices of the heavy and light marked points respectively. Additionally, we use $*$ to denote the index of another heavy marked point.

We construct a map 
$$\psi: \LM{n+1} \to \left(\coprod_{L\subsetneq\{3,\ldots, n\}; L\neq \varnothing}(\DMumford{\{1,*\},L}\times \DMumford{\{2,*\}, \{3,\ldots, n\}\setminus L })\right)\coprod \DMumford{\{1,2\}, \{3,\ldots,n\}}$$
as follows.

Let $(C;x_{\bullet})$ be a $w_{2,n+1}$-stable pointed curve. Let $C^{\sing}\subseteq C$ be the set of nodes of $C$. That is, the heavy marked points are placed at $x_1, x_2\in C\setminus C^{\sing}$, and the light marked points are placed at $x_3, x_4, \ldots, x_{n+1}\in C\setminus C^{\sing}$. Let $C_0\cong \Pj^1$ be the (unique) smooth component of $C$ containing $x_{n+1}$. We consider three cases.\\

\noindent\textbf{Case 1:} $C_0$ has exactly two marked points and $|C\cap C^{\sing}|=1$. Define $\psi(C;x_{\bullet})$ to be the $n$-pointed curve obtained from $C$ by deleting $C_0\setminus C^{\sing}$; and replacing the unique marked point on $C_0\setminus \{x_{n+1}\}$ with the unique node (which becomes smooth after deletion) on $C_0$. \\

\noindent\textbf{Case 2:}
$C_0$ has at least four special points (marked points and nodes) and $x_2\notin C_0$. Let $q\in C^{\sing}\cap C_0$ be the unique node with the property that $C\setminus \{q\}$ has two connected components $C_1,C_2$ where $x_1,x_{n+1}\in C_1$ and $x_2\in C_2$. Define $\psi(C;x_{\bullet})$ to be the pair of marked curves $(\overline{C_1},\overline{C_2})$, where each $\overline{C_i}$ contains the marked points that were previously on $C_i\subseteq C$, together with an additional marked point $x_*=q\in \overline{C_i}$. \\

\noindent\textbf{Case 3:} $C_0$ has only one marked point and $|C\cap C^{\sing}|=2$. Let $q'\in C^{\sing}\cap C_0$ be the unique node with the property that $x_2$ and $x_{n+1}$ lie on different connected components of $C\setminus \{q'\}$. Let $C'_0\neq C_0$ be the other smooth component of $C$ that contains $q'$. Let $q\in C^{\sing}\cap C'_0$ be the unique node such that $q'$ and $x_2$ lie on different connected components of $C\setminus \{q\}$. We repeat our definition of $\psi(C;x_{\bullet})$ from Case 2 with this choice of $q$.
\\

\noindent\textbf{Case 4:}
$C_0$ has at least four special points and $x_2\in C_0$. Define $\psi(C;x_{\bullet})$ to be the curve $C$ itself, with the $n+1$-th marked point deleted. 
\\

It follows that for each element $D$ in the codomain of $\psi$, its fiber, $\inv{\psi}(D)$, is isomorphic to $\Pj^1\setminus \{pt\}\cong \A^1$, if $D$ is a pair of marked curves; and isomorphic to $\Pj^1$, if $D$ is a single $n$-pointed curve. Since $p_{\Pj^1}(x)=1+x$ and $p_{\A^1}(x)=x$, the desired recurrence readily follows.